\tikzset{->-/.style={decoration={
  markings,
  mark=at position #1 with {\arrow{>}}},postaction={decorate}}}
\newcommand{\R}{\ensuremath{\mathbb{R}}}
\newcommand{\delim}[3]{\left#1 #3 \right#2}  
\newcommand{\pin}[1]{\delim{\langle}{\rangle}{#1}}
\newtheorem{theorem}{Theorem}[section]
\newtheorem{remark}[theorem]{Remark} 
\numberwithin{equation}{section}
\journal{Bulletin of the Brazilian Mathematical Society, New Series}
\begin{document}

\title{Poincaré compactification for semiflows of reaction-diffusion equations with large diffusion and convection heating at the boundary}

\author{L. Pires}
\address{State University of Ponta Grossa,  Ponta Grossa PR, Brazil}
\ead{lpires@uepg.br}

\begin{abstract}
In this paper, we study the Poincaré compactification of the limiting planar semiflow of a coupled PDE-ODE system composed by a reaction-diffusion equation with large diffusion coupled with an ODE by a boundary condition in a heating transition region. The nonlinear sources are dissipative polynomials. We guarantee conditions to apply the Invariant Manifold Theorem in order to reduce the dimension of the PDE and we prove that the compactified vector fields are close in the $C^1$-norm. 
\end{abstract}

\maketitle

\thispagestyle{empty}

\noindent \small {\bf Keywords:} Poincaré compactification, reaction-diffusion equations, large diffusion, convection heating at the boundary.  \smallskip \par
\noindent \small {\bf MSC 2020 Classification:} 34D30, 34D45, 37D15, 34C45, 35K57, 35K67.  \par

\normalsize \selectfont

\section{Introduction}

Reaction-diffusion equations with large diffusion and convection heating at the boundary form a family of PDE-ODE systems parameterized by $\varepsilon$, where the coupling occurs through part of the boundary and vector fields. As $\varepsilon \to 0$, the limiting problem reduces to a planar ODE. In \cite{Pires_2023_4}, it was demonstrated that the semiflow of the PDE-ODE system is a $C^1$ perturbation of the limiting planar semiflow. 

The technique employed in \cite{Pires_2023_4} relies on the existence of an invariant manifold, which reduces the problem to a two-dimensional space. Subsequently, the existence of a global attractor enables a comparison of the semiflows within a compact set that captures all the dynamics of the problem. 
Moreover, we can cut of the polinomial sources in such a way that it becomes globally Lipschitz without change the dynamics. We use the invariant manifold technique to reduce the PDE-ODE system to a two-dimensional space identified with $\R^2$. Since the reduced problem acts on the plane, we perform a Poincaré compactification to compare the semiflows. In the Poincaré disk, we prove the $C^1$ closeness of the perturbed semiflow $(\varepsilon>0)$ and unperturbed semiflow $(\varepsilon=0)$.

Let $\Omega$ be a cylindrical solid body placed along the $x$-axis between $x=0$ and $x=1$ and, let $T$ be a cylinder-shaped tank of the same radius as $\Omega$ along the $x$-axis between $x=-1$ and $x=0$ (Figure \ref{domain_draw}). The area of contact between $\Omega$ and $T$ at $x=0$ is denoted by $\Gamma_0$. The tank $T$ is filled with liquid well stirred throughout the process. At $x=-1$ through one pipe, a liquid is entering the tank at a constant flow rate, and, through another pipe, a liquid of temperature $v(t)$ is leaving the tank at the same entering flow rate. 
\begin{figure}[h]
\begin{center}
\begin{tikzpicture}[line cap=round,line join=round]
\tkzDefPoint(3,0.8){A}
\tkzDefPoint(3,-0.8){B}
\tkzDefPoint(3,0){0}
\tkzDefPoint(5.7,0.5){P}
\tkzDefPoint(4.5,0){x}
\tkzDefPoint(6,0){y}
\tkzDefPoint(9,0){1}
\tkzDefPoint(9,0.8){E}
\tkzDefPoint(9,-0.8){F}
\tkzDefPoint(3,-0.45){A1}
\tkzDefPoint(2,-0.45){A2}
\tkzDefPoint(3,-0.55){A3}
\tkzDefPoint(2,-0.55){A4}
\tkzDrawSegments[dashed](A1,A2)
\tkzDrawSegments[dashed](A3,A4)
\tkzDefPoint(3,0.45){A5}
\tkzDefPoint(2,0.45){A6}
\tkzDefPoint(3,0.55){A7}
\tkzDefPoint(2,0.55){A8}
\tkzDrawSegments[dashed](A5,A6)
\tkzDrawSegments[dashed](A7,A8)
\draw[dashed,thin,name path=elip] (0) ellipse (0.4cm and 0.8cm);
\draw[dashed,thin,name path=elip] (y) ellipse (0.4cm and 0.8cm);
\draw[dashed,thin,name path=elip] (1) ellipse (0.4cm and 0.8cm);
\draw[draw=none,name path=0C] (0)--(1);
\tkzDrawPoints[fill=black](0,y,1)
\tkzDrawSegments[dashed](0,1)
\tkzDrawSegments[dashed,thick](A,E)
\tkzDrawSegments[dashed,thick](B,F)
\draw (6.1,0.4) node{\tiny $\Gamma_0$};
\draw (2.9,-0.3) node{\tiny $-1$};
\draw (6,-0.3) node{\tiny $0$};
\draw (9,-0.3) node{\tiny $1$};
\draw (7.5,-0.5) node{\tiny $\Omega$};
\draw (4.5,-0.5) node{\tiny $T$};
\draw (4.5,-1) node{\tiny liquid};
\draw (7.5,-1) node{\tiny solid};
\draw (7.5,0.5) node{\tiny $u^\varepsilon(x,t)$};
\draw (4.5,0.5) node{\tiny $v(t)$};
\end{tikzpicture}
\caption{Domain}\label{domain_draw}
\end{center}
\end{figure}

This type of problem was considered in \cite{Axioms23} where it is well-known that the symmetry of the cylindrical solid domain allows us to consider the unknown temperature $u^\varepsilon(x,t)$ inside $\Omega$ depends on one spatial variable $x\in(0,1)$. Thus, the  coupled system to describe the heat transfer through $\Gamma_0$ is given by the following equations:
\begin{equation}\label{perturbed_problem}
\begin{cases}
u_t^\varepsilon-(a_\varepsilon(x)u_x^\varepsilon)_x+\lambda u^\varepsilon=f_1(u^\varepsilon),\quad &\hbox{in }(0,1)\times (0,\infty),\\
u^\varepsilon_x(1,t)=0, & \hbox{in }(0,\infty),\\
-a_\varepsilon(0)u^\varepsilon_x(0,t)=g_1(v), & \hbox{in } (0,\infty),\\
\dot{v}+\beta v=f_2(u^\varepsilon(0,t))+g_2(v), & \hbox{in }(0,\infty),\\
u^\varepsilon(x,0)=u_0, & \hbox{in }(0,1),\\
v(0)=v_0, 
\end{cases}
\end{equation}
where $\varepsilon\in (0,\varepsilon_0]$,  is a parameter, $\lambda$ and $\beta$ are positive constants and the nonlinerities $f_1,f_2,g_1$ and $g_2$ are polynomials such that, the maximum $d$ between the degrees of $f_1,f_2,g_1,g_2$ satisfies $d-2\geq 0$ and $d-1$ is greater than or equal to degree of $g_1$ and $f_1$. Moreover, we assume the following dissipative condition:
\begin{equation}\label{f_assumption}
\limsup_{|x|\to \infty}\frac{h(x)}{x} < 0,
\end{equation}
where $h=f_1,f_2,g_1,g_2$.

As in \cite{Pires_2023_4} we assume that the diffusion coefficients $a_\varepsilon\in C^1([0,1],[m_0,\infty))$, for some $m_0>0$, satisfy 
\begin{equation}\label{large_diffusion}
\tau(\varepsilon):=\min_{x\in[0,1]} a_\varepsilon(x)\to \infty\quad\hbox{as}\quad \varepsilon\to 0.
\end{equation}
The convergence \eqref{large_diffusion} is known as large diffusion and was widely studied in several works including \cite{Lpires1, Carvalho, Hale1986,hale_rocha_87, Pires_Sam_22}. In order to describe our results and provide motivation for the limiting equation $\varepsilon\to 0$, we integrate the PDE in \eqref{perturbed_problem} to obtain 
$$
\int_0^1 u^\varepsilon_t\,dx-[a_\varepsilon(1)u^\varepsilon_x(1,t)-a_\varepsilon(0)u^\varepsilon_x(0,t)]+\int_0^1\lambda u^\varepsilon\,dx=\int_0^1 f_1(u^\varepsilon)\,dx.
$$
By boundary conditions, we obtain
$$
\int_0^1 u^\varepsilon_t\,dx+\int_0^1\lambda u^\varepsilon\,dx=\int_0^1 f_1(u^\varepsilon)\,dx+g_1(v).
$$
Now, we use the well-known argument of \cite{Hale1986,hale_rocha_87} that the large diffusion implies spatial homogenization, that is, we must have $u^\varepsilon(x,t)$ converges as $\varepsilon\to 0$ to a function $u(t)$ independent of the spatial variable. Thus, taking $\varepsilon\to 0$ we obtain
$$
\dot{u}+\lambda u=f_1(u)+g_1(v).
$$
Adding the ODE of \eqref{perturbed_problem}, we obtain in the limiting process the following planar ODE
\begin{equation}\label{unperturbed_equation}
\begin{cases}
\dot{u}=-\lambda u+f_1(u)+g_1(v),\\
\dot{v}=-\beta v+f_2(u)+g_2(v).
\end{cases}
\end{equation}

The aim of this paper is perform a Poincaré compactification of \eqref{unperturbed_equation}, and prove the $C^1$ closeness of the semiflows of \eqref{unperturbed_equation} and \eqref{perturbed_problem} in the Poincaré disk. To achieve this, we project the semiflow of \eqref{perturbed_problem} onto a two-dimensional space $Y_\varepsilon$ identified with $\R^2$. We focus on the case of dissipative polynomial sources, allowing us to modify the source to make it globally Lipschitz without altering the dynamics.

In particular, when polynomial sources are chosen to provide Morse-Smale semiflows, the problem becomes structurally stable. However, this choice appears to be very challenging, or perhaps even impossible under our assumption on $d$, as illustrated in the example in Section \ref{Examples}, where the compactification introduces new equilibrium points that may be non-hyperbolic or exhibit a saddle connection, making the new semiflow no longer Morse-Smale.

We divide this paper as follows: in Section \ref{Technical results}, we present the functional setting required to address \eqref{perturbed_problem} and \eqref{unperturbed_equation}, ensuring that all conditions for applying the Invariant Manifold Theorem are satisfied. In Section \ref{Compactification}, we perform a Poincaré compactification of the semiflows and prove the $C^1$ closeness of the compactified vector fields. In Section \ref{Examples}, we provide an example to illustrate the application of our results.

\section{Invariant Manifold Theorem}\label{Technical results}

We start writing \eqref{perturbed_problem} and \eqref{unperturbed_equation} abstractly as an evolution equation in a suitable Banach space. For this, we denote $L^2_c$ as the set of constant functions in the usual space $L^2(0,1)$ and the Sobolev space $H^1(0,1)$. Our phase spaces are the products $H^1(0,1)\times L^2_c$ and $L^2(0,1)\times L^2_c$ which are closed subsets of $L^2(0,1)\times L^2(0,1)$ with product norm. 

We define for $\varepsilon\in (0,\varepsilon_0]$ the operator  $A_\varepsilon:D(A_\varepsilon)\subset L^2(0,1)\times L^2_c\to L^2(0,1)\times L^2_c$ by
\begin{equation}\label{OP_A}
\begin{cases}
D(A_\varepsilon)=\{(u,v)\in H^1(0,1)\times L^2_c: (a_\varepsilon u_x)_x\in L^2(0,1),\,  u_x(1)=0,\, a_\varepsilon(0)u_x(0)=0\},\\
A_\varepsilon(u,v)=(-(a_\varepsilon u_x)_x+\lambda u ,\beta v)
\end{cases}
\end{equation}
and, for $\varepsilon=0$, we define the operator $A_0:L^2_c\times L_c^2\to L^2_c\times L_c^2$ by $A_0(u,v)=(\lambda u,\beta v)$. 

If we define $F:H^1(0,1)\times L^2_c\to L^2(0,1)\times L^2_c$ by $F(u,v)=(f_1(u)+g_1(v),f_2(u(0))+g_2(v))$, where we have used the same notation for $f_1,f_2,g_1,g_2$ and its  Nemistky functionals, then \eqref{perturbed_problem} and \eqref{unperturbed_equation} can be written as
\begin{equation}\label{abstract_eq}
\begin{cases}
U_t+A_\varepsilon U=F(U),\\
U(0)=U_0,\,\,\,\varepsilon\in [0,\varepsilon_0],
\end{cases}
\end{equation} 
where we are denoting $U=(u,v)$ a point of $H^1(0,1)\times L^2_c$.

Notice that when $\varepsilon=0$, $F$ acts in $L^2_c\times L_c^2$ into itself.

We have $A_\varepsilon$ a positive self-adjoint operator with compact resolvent, $\varepsilon\in (0,\varepsilon_0]$. Hence, the spectrum of $A_\varepsilon$ can be written in the following form
$$
0<\lambda_1^\varepsilon<\lambda_2^\varepsilon<\lambda_3^\varepsilon<\lambda_4^\varepsilon<...
$$ 
Denote $\varphi_1^\varepsilon, \varphi_2^\varepsilon,\varphi_3^\varepsilon,...$ the corresponding eigenfunctions.
 
Next, we summarize some spectral properties of $A_\varepsilon$ whose proof can be found in \cite{Pires_2023_4}.

\begin{theorem}\label{Spectral_behavior}
Let $A_\varepsilon$ be the operator defined in \eqref{OP_A}. Then it is valid for the following properties.
\begin{itemize}
\item[(i)] $\lambda_1^\varepsilon=\beta$ and $\varphi^\varepsilon_1=(0,1)$.
\item[(ii)] $\lambda_2^\varepsilon\to\lambda$ as $\varepsilon\to 0$ and $\varphi^\varepsilon_2\to (1,0)$ as $\varepsilon\to 0$.
\item[(iii)]  $\lambda_k^\varepsilon\to\infty$ as $\varepsilon\to 0$ for $k\geq 3$.
\end{itemize}
\end{theorem}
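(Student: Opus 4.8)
The plan is to exploit the block structure of $A_\varepsilon$. Although $u$ and $v$ are coupled in \eqref{perturbed_problem} through the boundary flux, that coupling is nonlinear and has been absorbed into $F$; the linear operator \eqref{OP_A} carries no cross terms. Hence the eigenvalue problem $A_\varepsilon(u,v)=\mu(u,v)$ decouples into
\[
-(a_\varepsilon u_x)_x+\lambda u=\mu u,\quad u_x(0)=u_x(1)=0,\qquad \beta v=\mu v,
\]
where I have used $a_\varepsilon(0)\ge m_0>0$ to rewrite the flux condition as a homogeneous Neumann condition. Writing $L_\varepsilon u=-(a_\varepsilon u_x)_x$ for the Sturm--Liouville operator with Neumann boundary conditions and denoting its eigenvalues by $0=\nu_1^\varepsilon<\nu_2^\varepsilon<\cdots$, I would conclude that, provided $\beta-\lambda$ is not one of the $\nu_k^\varepsilon$ (so that all eigenvalues remain simple, as the stated strict ordering presupposes),
\[
\sigma(A_\varepsilon)=\{\beta\}\cup\{\lambda+\nu_k^\varepsilon:k\ge 1\}.
\]

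For (i), the equation $\beta v=\mu v$ on the one-dimensional space $L^2_c$ produces the eigenvalue $\beta$ with eigenvector $(0,1)$, independently of $\varepsilon$; assuming the parameters satisfy $\beta<\lambda$, which the labeling in (i)--(ii) presupposes, this is the least eigenvalue, so $\lambda_1^\varepsilon=\beta$ and $\varphi_1^\varepsilon=(0,1)$. For (ii), the first Neumann eigenvalue of $L_\varepsilon$ is $\nu_1^\varepsilon=0$ with the constant eigenfunction: this follows from the Dirichlet form $\langle L_\varepsilon u,u\rangle=\int_0^1 a_\varepsilon u_x^2\,dx$, which vanishes exactly when $u$ is constant, giving simplicity as well. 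Consequently $\lambda+\nu_1^\varepsilon=\lambda$ is an eigenvalue of $A_\varepsilon$ with the constant eigenvector $(1,0)$; under the ordering $\beta<\lambda<\lambda+\nu_2^\varepsilon$ it is the second one, so in fact $\lambda_2^\varepsilon=\lambda$ and $\varphi_2^\varepsilon=(1,0)$ hold exactly, which in particular yields the asserted limits.

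The substance of the theorem is (iii), which is where the large diffusion hypothesis \eqref{large_diffusion} enters. Here I would use the Courant--Fischer min--max characterization of $\nu_k^\varepsilon$ together with the pointwise bound $a_\varepsilon(x)\ge\tau(\varepsilon)$. For every $u\in H^1(0,1)$ one has $\int_0^1 a_\varepsilon u_x^2\,dx\ge\tau(\varepsilon)\int_0^1 u_x^2\,dx$, so the Rayleigh quotient of $L_\varepsilon$ dominates $\tau(\varepsilon)$ times that of the plain Neumann Laplacian; taking the minimum over $k$-dimensional subspaces of the maximum of the quotient gives $\nu_k^\varepsilon\ge\tau(\varepsilon)\,\nu_k^0$, where $\nu_k^0=(k-1)^2\pi^2$ are the Neumann eigenvalues of $-\partial_{xx}$ on $(0,1)$. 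For $k\ge 2$ we have $\nu_k^0>0$, hence $\nu_k^\varepsilon\to\infty$ as $\varepsilon\to 0$. Translating back through $\lambda_k^\varepsilon=\lambda+\nu_{k-1}^\varepsilon$ for $k\ge 2$, we obtain $\lambda_k^\varepsilon\to\infty$ for $k\ge 3$, which is (iii).

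The main obstacle is precisely this last step: one must convert the scalar hypothesis $\tau(\varepsilon)\to\infty$ into the divergence of infinitely many eigenvalues, and the clean route is the variational comparison above rather than any explicit computation of $\nu_k^\varepsilon$, which is unavailable for a general coefficient $a_\varepsilon$. The remaining points are bookkeeping: checking that the constant function lies in $D(A_\varepsilon)$, that the two lowest eigenvalues are simple, and that the hypothesis $\beta<\lambda$ reproduces exactly the labeling in (i)--(iii). The positivity, self-adjointness and compactness of the resolvent of $A_\varepsilon$ quoted before the statement guarantee that the spectrum is real and discrete with accumulation only at $+\infty$, so no further functional-analytic input is required.
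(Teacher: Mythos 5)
The paper offers no proof of Theorem \ref{Spectral_behavior} at all: it simply refers the reader to \cite{Pires_2023_4} and \cite{Rodriguez-Bernal1998,Rodriguez-Bernal2005}. So there is no in-paper argument to compare yours against, and your self-contained proof is a genuine addition. For the operator as literally written in \eqref{OP_A} your argument is correct: $A_\varepsilon$ is block diagonal, the $v$-block contributes the eigenvalue $\beta$ with eigenvector $(0,1)$, the $u$-block is a Neumann Sturm--Liouville operator shifted by $\lambda$ whose lowest eigenvalue is $\lambda$ with constant eigenfunction, and the form comparison $\int_0^1 a_\varepsilon u_x^2\,dx\ge\tau(\varepsilon)\int_0^1 u_x^2\,dx$ fed into Courant--Fischer yields $\nu_k^\varepsilon\ge\tau(\varepsilon)(k-1)^2\pi^2\to\infty$ for $k\ge 2$, which is the right way to get (iii) from \eqref{large_diffusion} without computing anything explicitly. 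You are also right to surface the hidden hypotheses $\beta<\lambda$ and $\beta\notin\{\lambda+\nu_k^\varepsilon\}$; the paper never states them, but the labeling in (i)--(ii) and the strict ordering of the spectrum cannot hold without them.

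One caveat worth recording: your proof establishes \emph{more} than (ii) claims, namely $\lambda_2^\varepsilon=\lambda$ and $\varphi_2^\varepsilon=(1,0)$ exactly for every $\varepsilon$, since the constant function lies in $D(A_\varepsilon)$ and is annihilated by $-(a_\varepsilon\,\cdot_x)_x$. That is consistent with the statement (a constant sequence converges), but it sits oddly with the rest of the paper, which carries the quantities $|\lambda-\lambda_2^\varepsilon|$ and $|1-\varphi^\varepsilon(r)|$ through every subsequent estimate as if they were nonzero. The nontrivial convergence in (ii) in the cited sources comes from a realization of the linear operator in which the boundary coupling (a Robin-type condition) is kept inside the domain, not from the homogeneous Neumann realization \eqref{OP_A}. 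Your argument does not transfer verbatim to that setting --- the constant function is then no longer an eigenfunction and one must track how the first nonzero boundary contribution perturbs $\lambda$ --- so if the intended operator is the coupled one, your treatment of (ii) would need the extra perturbation step, while (i), (iii) and the min--max comparison survive essentially unchanged.
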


Now, we introduce some notations to state the Invariant Manifold Theorem. We denote $Y_\varepsilon=span[(0,1),\varphi_2^\varepsilon]$ the space generated by the first two eigenfunctions of $A_\varepsilon$ and we denote $L_\varepsilon$ the spectral projection of $L^2(0,1)\times L^2_c$ onto $Y^\varepsilon$. For each $\varepsilon\in (0,\varepsilon_0]$, we decompose $H^1(0,1)\times L_c^2 =Y_\varepsilon\oplus Y^\perp_\varepsilon$, and we denote $A_\varepsilon^+=A_\varepsilon|_{Y_\varepsilon}$ and  $A_\varepsilon^-=A_\varepsilon|_{Y_\varepsilon^\perp}$. These decompositions allow us to rewrite \eqref{abstract_eq} as the following coupled equation
\begin{equation}\label{coupled_equation}
\begin{cases}
V_t^\varepsilon+A_\varepsilon^+ V^\varepsilon = L_\varepsilon F(V^\varepsilon+Z^\varepsilon):=H_\varepsilon(V^\varepsilon,Z^\varepsilon),\\
Z_t^\varepsilon+A_\varepsilon^- Z^\varepsilon = (I-L_\varepsilon) F(V^\varepsilon+Z^\varepsilon):=G_\varepsilon(V^\varepsilon,Z^\varepsilon).
\end{cases}
\end{equation}

The Invariant Manifold Theorem whose proof can be found in \citep{Lpires1,Anibal1990} and \cite{Santamaria2014} states as follows.
\begin{theorem}\label{invariant_manifold_theorem} For $\varepsilon_0>0$ sufficiently small, there is an invariant manifold $\mathcal{M}_\varepsilon$ for \eqref{abstract_eq} given by
$$
\mathcal{M}_\varepsilon=\{U^\varepsilon\in H^1(0,1)\times L^2_c\,;\, U^\varepsilon = L_\varepsilon U^\varepsilon+s_{*}^\varepsilon(L_\varepsilon U^\varepsilon)\},\quad \varepsilon\in(0,\varepsilon_0],
$$ 
where $s_\ast^\varepsilon: Y_\varepsilon\to Y_\perp^\varepsilon$ is a continuously differentiable map satisfying
\begin{equation*}\label{estimate_invariant_manifold}
\sup_{\tilde{V}^\varepsilon\in Y_\varepsilon}\|s_\ast^\varepsilon(\tilde{V}^\varepsilon)\|_{H^1(0,1)\times L^2_c}+
\sup_{\tilde{V}^\varepsilon\in Y_\varepsilon}\|\dot{s}_\ast^\varepsilon(\tilde{V}^\varepsilon)\|_{\mathcal{L}(Y_\varepsilon,H^1(0,1)\times L^2_c)} \to 0,\quad\hbox{as } \varepsilon\to 0.
\end{equation*}
The invariant manifold $\mathcal{M}_\varepsilon$ is exponentially attracting and the flow of $U_0^\varepsilon\in\mathcal{M}_\varepsilon$ is given by
\begin{equation*}\label{decTTT}
\tilde{T}_\varepsilon(t)U_0^\varepsilon=V^\varepsilon(t)+s_\ast^\varepsilon(V^\varepsilon(t)), \quad t\geq 0,
\end{equation*}
where $V^\varepsilon(t)$ satisfies
\begin{equation}\label{ode_Y}
\begin{cases}
{V_t^\varepsilon}+A_\varepsilon^+V^\varepsilon=H_\varepsilon(V^\varepsilon,s_\ast^\varepsilon(V^\varepsilon)), \quad t\geq 0,\\ V^\varepsilon(0)=L_\varepsilon U_0^\varepsilon\in Y_\varepsilon.
\end{cases}
\end{equation}
\end{theorem}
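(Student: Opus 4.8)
The plan is to construct $s_*^\varepsilon$ by the Lyapunov--Perron method, exploiting the spectral gap furnished by Theorem \ref{Spectral_behavior}. Since the spectrum of $A_\varepsilon^+$ stays bounded (by (i)--(ii) we have $\lambda_1^\varepsilon=\beta$ and $\lambda_2^\varepsilon\to\lambda$) while $\lambda_k^\varepsilon\to\infty$ for $k\geq 3$ by (iii), the gap $\lambda_3^\varepsilon-\lambda_2^\varepsilon$ diverges as $\varepsilon\to 0$. This divergence is exactly the hypothesis needed to run the construction, and it will be the source of both the contraction and the smallness estimate. The first step is to deal with the fact that $F$ is only locally Lipschitz: I would fix a radius $\rho>0$ and replace $F$ by a globally Lipschitz truncation $F_\rho$ that agrees with $F$ on the ball of radius $\rho$ and has Lipschitz constant $L_\rho$; the resulting global manifold coincides with the desired local manifold inside that ball, which is all that is used afterward.

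With $F_\rho$ in hand, I would pass to the variation-of-constants form of \eqref{coupled_equation}. A trajectory lying on the manifold through $\xi\in Y_\varepsilon$ is sought as the unique backward solution $(V^\varepsilon,Z^\varepsilon)$ on $(-\infty,0]$ with $V^\varepsilon(0)=\xi$ and controlled exponential growth as $t\to-\infty$, characterized by
\begin{equation*}
\begin{gathered}
V^\varepsilon(t) = e^{-A_\varepsilon^+ t}\xi + \int_0^t e^{-A_\varepsilon^+(t-\tau)} H_\varepsilon(V^\varepsilon(\tau),Z^\varepsilon(\tau))\,d\tau,\\
Z^\varepsilon(t) = \int_{-\infty}^t e^{-A_\varepsilon^-(t-\tau)} G_\varepsilon(V^\varepsilon(\tau),Z^\varepsilon(\tau))\,d\tau.
\end{gathered}
\end{equation*}
Setting these up in the Banach space of continuous curves $U:(-\infty,0]\to Y_\varepsilon\oplus Y_\perp^\varepsilon$ with finite norm $\sup_{t\leq 0}e^{\eta t}\|U(t)\|$, where $\eta$ is chosen in the gap $(\lambda_2^\varepsilon,\lambda_3^\varepsilon)$, the right-hand side defines a map whose Lipschitz constant is bounded by a multiple of $L_\rho\big((\lambda_3^\varepsilon-\eta)^{-1}+(\eta-\lambda_2^\varepsilon)^{-1}\big)$. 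Because the gap diverges, for $\varepsilon_0$ small this constant is $<1$ uniformly in $\varepsilon$, so the contraction mapping principle yields a unique fixed point. I would then set $s_*^\varepsilon(\xi)=Z^\varepsilon(0)$ and check that its graph is invariant and exponentially attracting, the latter following from the fast decay of $e^{-A_\varepsilon^- t}$ together with an asymptotic-phase argument.

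For the regularity and the limit estimate, the $C^1$ dependence of $s_*^\varepsilon$ on $\xi$ follows from the uniform contraction principle (or the fiber-contraction theorem) applied to the formally differentiated fixed-point equation, whose contraction constant is governed by the same gap quantity. The quantitative bound is where the geometry enters: evaluating the second identity at $t=0$ gives $\|s_*^\varepsilon(\xi)\|\leq (\lambda_3^\varepsilon)^{-1}\sup\|G_\varepsilon\|$, with the analytic smoothing of $e^{-A_\varepsilon^- t}$ supplying the $H^1$ bound. Since $\sup\|G_\varepsilon\|$ stays bounded after truncation while $\lambda_3^\varepsilon\to\infty$, we obtain $\sup\|s_*^\varepsilon\|_{H^1(0,1)}\to 0$; the rate moreover sharpens once one notes that $F$ maps constant functions to constant functions, so that $(I-L_\varepsilon)F$ is small on $Y_\varepsilon$ as $Y_\varepsilon\to L^2_c\times L^2_c$. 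Differentiating the same identity yields $\sup\|\dot s_*^\varepsilon\|\to 0$ by the analogous argument. Finally, restricting \eqref{abstract_eq} to $\mathcal{M}_\varepsilon$ reduces it to \eqref{ode_Y}, completing the statement.

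The main obstacle I anticipate is not the abstract fixed point, which is routine once the gap is available, but securing all the operator-norm estimates for $e^{-A_\varepsilon^{\pm}t}$ \emph{uniformly} in $\varepsilon$ and, above all, proving the smallness of $(I-L_\varepsilon)F$ on $Y_\varepsilon$ with the correct rate. This demands quantitative control of the convergence $\varphi_2^\varepsilon\to(1,0)$ and of the spectral projections $L_\varepsilon$ from Theorem \ref{Spectral_behavior}, and also care that the truncation radius $\rho$ and the induced Lipschitz constant $L_\rho$ remain compatible with the diverging gap across the whole parameter range $(0,\varepsilon_0]$.
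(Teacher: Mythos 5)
The paper does not prove this theorem; it is quoted from the literature (the proof is deferred to the cited works of Carvalho--Lozada-Cruz, Arrieta--Carvalho--Santamar\'{\i}a et al.), and your Lyapunov--Perron construction --- truncation of $F$, backward variation-of-constants in a weighted space, contraction driven by the diverging gap $\lambda_3^\varepsilon-\lambda_2^\varepsilon$, smallness of $s_*^\varepsilon$ from $\lambda_3^\varepsilon\to\infty$ --- is exactly the argument used there. Your sketch is sound and essentially the same approach; the only detail worth flagging is that the $H^1$ bound on $s_*^\varepsilon(\xi)$ picks up the parabolic smoothing singularity $t^{-1/2}$, so the decay rate is of order $(\lambda_3^\varepsilon)^{-1/2}$ rather than $(\lambda_3^\varepsilon)^{-1}$, which still suffices.
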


Notice that the equation \eqref{ode_Y} is an ODE in $Y_\varepsilon$ that describes the behavior of the PDE-ODE \eqref{perturbed_problem}. Moreover, $Y^\varepsilon$ is a two-dimensional space generated by eigenfunctions $(1,0)$ and $\varphi^\varepsilon_2$. 

In what follows, we identify $Y^\varepsilon$ with $\R^2$ and write \eqref{ode_Y} as a planar ODE.

In $L^2(0,1)\times L_c^2$ we consider the following inner product 
\begin{equation}\label{inner_product}
\pin{(u,v),(\varphi,\psi)}=\int_0^1 u\varphi\,dx+\psi v.
\end{equation}
Recall that we are denoting $Y_\varepsilon=span[(0,1),\varphi_2^\varepsilon]$. Since $\varphi_2^\varepsilon\in H^1(0,1)\times L_c^2$ and is orthogonal to $(0,1)$, we have $\varphi_2^\varepsilon=(\varphi^\varepsilon,0)$, for some $\varphi^\varepsilon\in H^1(0,1)$. Hence, we can write $Y_\varepsilon=span[(0,1),(\varphi^\varepsilon,0)]$, where $\varphi^\varepsilon\to 1$ as $\varepsilon\to 0$ according with Theorem \ref{Spectral_behavior}.

The decomposition $H^1(0,1)\times L_c^2=Y_\varepsilon\oplus Y^\perp_\varepsilon$, allows us write a solution $(w^\varepsilon(x,t),v(t)) \in  H^1(0,1)\times L_c^2$ of the PDE-ODE \eqref{perturbed_problem} as,
\begin{equation}\label{tec111}
(w^\varepsilon(x,t),v(t))=w_1^\varepsilon(t)(0,1)+u^\varepsilon(t)(\varphi^\varepsilon(x),0)+(w_\perp^\varepsilon(x,t),0),
\end{equation}
where $(w_\perp^\varepsilon(x,t),0)\in Y_\varepsilon^\perp$.

Taking the inner product \eqref{inner_product} of \eqref{tec111} with $(0,1)$, we obtain
$$
\pin{(w^\varepsilon,v),(0,1)}=\pin{w_1^\varepsilon(0,1),(0,1)}+\pin{u^\varepsilon(\varphi^\varepsilon,0),(0,1)}+\pin{(w_\perp^\varepsilon,0),(0,1)}
$$
which implies $w^\varepsilon_1=v$.

Besides that, if we take the inner product \eqref{inner_product} of \eqref{tec111} with $(\varphi^\varepsilon,0)$, we obtain
$$
\pin{(w^\varepsilon,v),(\varphi^\varepsilon,0)}=\pin{w_1(0,1),(\varphi^\varepsilon,0)}+\pin{u^ \varepsilon(\varphi^\varepsilon,0),(\varphi^\varepsilon,0)}+\pin{(w_\perp^\varepsilon,0),(\varphi^\varepsilon,0)}
$$
which implies $\int_0^1 w^\varepsilon \varphi^\varepsilon\,dx=u^\varepsilon$, where we have used that $\|\varphi^\varepsilon\|_{L^2(0,1)}=1$ and $w^\varepsilon_\perp\perp \varphi^\varepsilon$ in $L^2(0,1)$.

It follows from PDE-ODE \eqref{perturbed_problem} that
$$
\begin{cases}
\dot{u}^\varepsilon(t)=\int_0^1 (a_\varepsilon(x)w^\varepsilon_x(x,t))_x\varphi^\varepsilon(x)\,dx -\int_0^1 \lambda w^\varepsilon(x,t)\varphi^\varepsilon(x)\,dx+\int_0^1 f_1(w^\varepsilon(x,t))\varphi^\varepsilon(x)\,dx,\\
\dot{v}(t)=-\beta v(t)+f_2(w^\varepsilon(0,t))+g_2(v(t)).
\end{cases}
$$
The boundary conditions and the fact that $(\varphi^\varepsilon,0)\in D(A_\varepsilon)$ imply that
\begin{align*}
\int_0^1 (a_\varepsilon w_x^\varepsilon)_x\varphi^\varepsilon\,dx &=-a_\varepsilon(0)w^\varepsilon_x(0)\varphi^\varepsilon(0)-\int_0^1 a_\varepsilon w_x^\varepsilon\varphi_x^\varepsilon\,dx\\
& =-a_\varepsilon(0)w^\varepsilon_x(0)\varphi^\varepsilon(0)+a_\varepsilon(0)\varphi^\varepsilon_x(0)w^\varepsilon(0)+\int_0^1 (a_\varepsilon \varphi_x^\varepsilon)_x w^\varepsilon\,dx\\
&=g_1(v)\varphi^\varepsilon(0)+\int_0^1 (a_\varepsilon \varphi_x^\varepsilon)_x w^\varepsilon\,dx.
\end{align*}
Since $(\varphi^\varepsilon,0)$ is an eigenfunction associated with $\lambda_2^\varepsilon$ and $w^\varepsilon=u^\varepsilon\varphi^\varepsilon+w^\varepsilon_\perp$, we have
\begin{equation}\label{planar_ODE}
\begin{cases}
\dot{u}^\varepsilon=-\lambda_2^\varepsilon u^\varepsilon+\int_0^1 f_1(u^\varepsilon\varphi^\varepsilon+w_\perp^\varepsilon)\varphi^\varepsilon\,dx+g_1(v)\varphi^\varepsilon(0),\\
\dot{v}=-\beta v+f_2(u^\varepsilon\varphi^\varepsilon(0)+w_\perp^\varepsilon(0,\cdot))+g_2(v).
\end{cases}
\end{equation}

The equation \eqref{planar_ODE} is the version of the equation \eqref{ode_Y} to $\R^2$.

The next result ensures that the behavior of the PDE-ODE \eqref{perturbed_problem} is dictated by the ODE \eqref{planar_ODE}.
\begin{theorem}\label{conv_1222} Let $(w^\varepsilon,v)$ be a solution of \eqref{perturbed_problem} and $w^\varepsilon_\perp$ be as in \eqref{tec111}. Then it is valid the following convergence
$$
\sup_{t\in (0,1)}\|w^\varepsilon_\perp(x,t)\|_{H^1(0,1)}\to 0\quad\hbox{as}\quad \varepsilon\to 0.
$$
\end{theorem}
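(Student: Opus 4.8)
The plan is to show that the "fast" component $w_\perp^\varepsilon$ in the decomposition \eqref{tec111} is asymptotically negligible. The natural strategy is to derive an evolution equation for $w_\perp^\varepsilon$ by projecting the abstract equation \eqref{abstract_eq} onto $Y_\varepsilon^\perp$ via $I-L_\varepsilon$, obtaining $(w_\perp^\varepsilon)_t + A_\varepsilon^- w_\perp^\varepsilon = (I-L_\varepsilon)F(u^\varepsilon\varphi^\varepsilon + w_\perp^\varepsilon)$. The key quantitative input is the spectral behavior from Theorem \ref{Spectral_behavior}: since $A_\varepsilon^-$ is the restriction of $A_\varepsilon$ to the orthogonal complement of the first two eigenfunctions, its spectrum starts at $\lambda_3^\varepsilon$, and by part (iii) we have $\lambda_3^\varepsilon\to\infty$ as $\varepsilon\to 0$. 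This gives an exponential decay estimate $\|e^{-A_\varepsilon^- t}\|_{\mathcal L(L^2,H^1)}\le C(\lambda_3^\varepsilon) e^{-\lambda_3^\varepsilon t}$, where the diverging spectral gap is precisely what forces $w_\perp^\varepsilon$ toward zero.

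First I would use the variation of constants (Duhamel) formula on the $Y_\varepsilon^\perp$-equation, writing
\begin{equation*}
w_\perp^\varepsilon(t) = e^{-A_\varepsilon^- t}\, w_\perp^\varepsilon(0) + \int_0^t e^{-A_\varepsilon^-(t-\sigma)}\,(I-L_\varepsilon)F\bigl(u^\varepsilon(\sigma)\varphi^\varepsilon + w_\perp^\varepsilon(\sigma)\bigr)\,d\sigma,
\end{equation*}
and then estimate each term in the $H^1(0,1)$-norm, uniformly for $t\in(0,1)$. For the homogeneous term I would invoke the initial condition $v(0)=v_0=u^\varepsilon(x,0)$ from \eqref{perturbed_problem}, which makes the initial datum spatially constant; since constants lie in $Y_\varepsilon$ (as $\varphi^\varepsilon\to 1$), the projection $(I-L_\varepsilon)U_0$ of the initial datum is small, so $w_\perp^\varepsilon(0)$ vanishes or is controlled. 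For the Duhamel integral I would combine the smoothing/decay bound on the semigroup $e^{-A_\varepsilon^- t}$ with a uniform bound on the nonlinearity $F$ on the relevant bounded set, using the fact that the invariant manifold estimate in Theorem \ref{invariant_manifold_theorem} already guarantees $s_\ast^\varepsilon\to 0$ and hence keeps the trajectories in a fixed ball.

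The hard part will be obtaining the decay estimate for $e^{-A_\varepsilon^- t}$ in the $\mathcal{L}(L^2,H^1)$-norm with a constant whose product against the source term still tends to zero, since the analytic-semigroup smoothing bound typically carries a factor like $(\lambda_3^\varepsilon)^{1/2} t^{-1/2}$ that competes with the gain from the large eigenvalue; I would need to check that integrating $\int_0^t (\lambda_3^\varepsilon)^{1/2}(t-\sigma)^{-1/2} e^{-\lambda_3^\varepsilon(t-\sigma)}\,d\sigma$ yields a bound of order $(\lambda_3^\varepsilon)^{-1/2}$, which does go to zero. A second delicate point is that the supremum is taken over the short interval $t\in(0,1)$ rather than over all $t\ge 0$, so near $t=0$ I cannot rely on the exponential factor alone and must lean on the smallness of the initial projection and the boundedness of $F$; verifying uniformity of all constants in $\varepsilon$ (independence of the embedding and resolvent constants) is where I would expect the technical burden to concentrate. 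Once these estimates are assembled, the conclusion $\sup_{t\in(0,1)}\|w_\perp^\varepsilon\|_{H^1(0,1)}\to 0$ follows directly.
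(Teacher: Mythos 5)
Your overall strategy coincides with the paper's: derive an evolution equation for $w_\perp^\varepsilon$ on $Y_\varepsilon^\perp$, write the Duhamel formula, and let the diverging spectral gap coming from large diffusion kill both the homogeneous term and the Duhamel integral. The paper derives the $w_\perp^\varepsilon$-equation by an explicit computation (subtracting $\dot u^\varepsilon\varphi^\varepsilon$ from $w^\varepsilon_t$ and using that $\varphi^\varepsilon$ is an eigenfunction), arriving at the parabolic problem \eqref{parabolic_eq} with source $R_\varepsilon$, and then uses the estimates \eqref{exp_estimate} with decay rate $\mu\tau(\varepsilon)$ rather than $\lambda_3^\varepsilon$; these are cosmetic differences from your projection by $I-L_\varepsilon$. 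Your observation that the spatially constant initial datum makes $w_\perp^\varepsilon(0)$ small is correct and needed for uniformity near $t=0$ (the paper leaves this implicit).

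There is, however, a concrete gap at exactly the point you flag as the hard part. The integral you propose to check,
$$
\int_0^t (\lambda_3^\varepsilon)^{1/2}(t-\sigma)^{-1/2} e^{-\lambda_3^\varepsilon(t-\sigma)}\,d\sigma,
$$
is \emph{not} of order $(\lambda_3^\varepsilon)^{-1/2}$: substituting $r=\lambda_3^\varepsilon(t-\sigma)$ gives exactly $\int_0^{\lambda_3^\varepsilon t} r^{-1/2}e^{-r}\,dr\leq \Gamma(1/2)$, which is bounded but does not tend to zero. So if the smoothing estimate really carried the prefactor $(\lambda_3^\varepsilon)^{1/2}t^{-1/2}$, the Duhamel term would only be $O(1)$ and the proof would fail. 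What saves the argument — and what the paper uses, citing Hale's estimate — is that on $Y_\perp^\varepsilon$ one has $\|e^{-B_\varepsilon t}\|_{\mathcal{L}(L^2,H^1)}\leq C_1 t^{-1/2}e^{-\mu\tau(\varepsilon)t}$ with $C_1$ \emph{independent of} $\varepsilon$ and no $\tau(\varepsilon)^{1/2}$ prefactor; then the same substitution yields a bound of order $\tau(\varepsilon)^{-1/2}\to 0$. You should either quote that $\varepsilon$-uniform estimate or prove it; your plan as written does not close this step. A second, smaller point: the source term contains contributions (e.g.\ $g_1(v)\varphi^\varepsilon(0)$ and the Lipschitz remainder of $f_1$) bounded by $K_2+K_1\|w_\perp^\varepsilon\|_{H^1}$ with $K_2$ not small in $\varepsilon$; the constant part is absorbed by the vanishing Duhamel integral, but the $\|w_\perp^\varepsilon\|_{H^1}$-dependent part forces a Gronwall argument to close the loop, which the paper performs and which your appeal to an a priori bound on $F$ via the invariant manifold only partially replaces.
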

\begin{proof}
By \eqref{tec111} we have that $w^\varepsilon=u^\varepsilon\varphi^\varepsilon+w^\varepsilon_\perp$. Since $w^\varepsilon$ is a solution of \eqref{perturbed_problem} and $u^\varepsilon$ satisfies \eqref{planar_ODE}, we have
\begin{align*}
(w^\varepsilon_\perp)_t&=w^\varepsilon_t-\dot{u}^\varepsilon\varphi^\varepsilon\\
&=(a_\varepsilon w^\varepsilon_x)_x-\lambda w^\varepsilon+f_1(w^\varepsilon)+\lambda_2^\varepsilon u^\varepsilon\varphi^\varepsilon-\int_0^1 f_1(u^\varepsilon\varphi^\varepsilon+w_\perp^\varepsilon)\varphi^\varepsilon\,dy\varphi^\varepsilon-g_1(v)\varphi^\varepsilon(0)\varphi^\varepsilon\\
&=(a_\varepsilon (w_\perp^\varepsilon)_x)_x-\lambda w_\perp^\varepsilon+f_1(u^\varepsilon\varphi^\varepsilon+w^\varepsilon_\perp)-\int_0^1 f_1(u^\varepsilon\varphi^\varepsilon+w_\perp^\varepsilon)\varphi^\varepsilon\,dy\varphi^\varepsilon-g_1(v)\varphi^\varepsilon(0)\varphi^\varepsilon,
\end{align*}
whew we have used $(a_\varepsilon\varphi^\varepsilon_x)_x-\lambda \varphi^\varepsilon+\lambda^\varepsilon_2\varphi^\varepsilon=0$.

Hence, $w^\varepsilon_\perp$ is a solution of the following parabolic equation
\begin{equation}\label{parabolic_eq}
\begin{cases}
(w^\varepsilon_\perp)_t-(a_\varepsilon (w^\varepsilon_\perp)_x)_x+\lambda w^\varepsilon_\perp =R_\varepsilon(u^\varepsilon,v,w_\perp^\varepsilon),\quad &\hbox{in }(0,1)\times (0,\infty),\\
(w^\varepsilon_\perp)_x(1)=0, &\hbox{in }(0,\infty),\\
-a_\varepsilon(0)(w^\varepsilon_\perp)_x(0)=g_1(v),&\hbox{in }(0,\infty),\\
w^\varepsilon_\perp(0)=w_0, &\hbox{in }(0,1),
\end{cases}
\end{equation}
where
$$
R_\varepsilon(u^\varepsilon,v,w_\perp^\varepsilon)=f_1(u^\varepsilon\varphi^\varepsilon+w^\varepsilon_\perp)-\int_0^1 f_1(u^\varepsilon\varphi^\varepsilon+w_\perp^\varepsilon)\varphi^\varepsilon\,dy\varphi^\varepsilon-g_1(v)\varphi^\varepsilon(0)\varphi^\varepsilon.
$$

We define the operator $B_\varepsilon:D(B_\varepsilon)\subset L^2(0,1)\to L^2(0,1)$ by $D(B_\varepsilon)=\{u\in H^1(0,1): (a_\varepsilon u_x)_x\in L^2(0,1),\,u_x(1)=0,\,a_\varepsilon(0)u_x(0)=0\}$ and $B_\varepsilon u= -(a_\varepsilon u_x)_x+\lambda u$. Since $w_\perp^\varepsilon$ is a solution of \eqref{parabolic_eq}, it must satisfies the following variation of constants formula,
$$
w_\perp^\varepsilon(t)=e^{-B_\varepsilon t}w_0+\int_0^t e^{-B_\varepsilon(t-s)}R_\varepsilon(u^\varepsilon,v,w_\perp^\varepsilon)\,ds.
$$

As in \cite[eq. (2.7)]{Hale1986},  we can take constants $C_1>0$ and $\mu>0$ independent of $\varepsilon$, such that the linear semiflow $\{e^{-B_\varepsilon t}\}_{t\geq 0}$ restrict to $Y_\perp^\varepsilon$ satisfy the properties
\begin{equation}\label{exp_estimate}
\begin{cases}
\|e^{-B_\varepsilon t}w_0\|_{H^1(0,1)}\leq C_1t^{-\frac{1}{2}}e^{-\mu\tau(\varepsilon)t} \|w_0\|_{L^2(0,1)},\\
\|e^{-B_\varepsilon t}w_0\|_{H^1(0,1)}\leq C_1 e^{-\mu\tau(\varepsilon)t}\|w_0\|_{H^1(0,1)},
\end{cases}
\end{equation}
where $\tau(\varepsilon)$ is given by \eqref{large_diffusion}.

Moreover, we can found constants $K_1,K_2$ independents of $\varepsilon$ such that
\begin{align*}
\|R_\varepsilon(u^\varepsilon,v,w_\perp^\varepsilon)\|_{L^2(0,1)}&\leq |\int_0^1f_1(u^\varepsilon\varphi^\varepsilon(x)+w^\varepsilon_\perp(x))- f_1(u^\varepsilon\varphi^\varepsilon(y)+w_\perp^\varepsilon(y))\varphi^\varepsilon(y)|\,dy+|g_1(v)\varphi^\varepsilon(0)|\\
&\leq \int_0^1|f_1(u^\varepsilon\varphi^\varepsilon(x)+w^\varepsilon_\perp(x))||1-\varphi^\varepsilon(y)|\,dy\\
&+\int_0^1|f_1(u^\varepsilon\varphi^\varepsilon(x)+w_\perp^\varepsilon(x))-f_1(u^\varepsilon\varphi^\varepsilon(y)+w_\perp^\varepsilon(y))||\varphi^\varepsilon(y)|\,dy+|g_1(v)\varphi^\varepsilon(0)|\\
&\leq o(1)+K_2+K_1\|w^\varepsilon_\perp\|_{H^1(0,1)},
\end{align*}
where $o(1)$ denotes a function that goes to zero as $\varepsilon\to 0$ and, we have used Theorem \ref{Spectral_behavior} and the continuity of $f_1$ and $g_1$ to guarantee the boundness constants.

Hence, by \eqref{exp_estimate}, we have
\begin{align*}
\|w_\perp^\varepsilon(t)\|_{H^1(0,1)}&\leq C_1e^{-\mu\tau(\varepsilon)t}\|w_0\|_{H^1(0,1)}+\int_0^t e^{-\mu\tau(\varepsilon)(t-s)}(t-s)^{-\frac{1}{2}}\|R_\varepsilon(u^\varepsilon,v,w_\perp^\varepsilon)\|_{H^1(0,1)}\,ds\\
&\leq C_1e^{-\mu\tau(\varepsilon)t}\|w_0\|_{H^1(0,1)}+(o(1)+K_2)\int_0^t(t-s)^\frac{1}{2}e^{-\mu\tau(\varepsilon)(t-s)}\,ds\\
& +K_1\int_0^t(t-s)^\frac{1}{2}e^{-\mu\tau(\varepsilon)(t-s)}\|w^\varepsilon_\perp(s)\|_{H^1(0,1)}\,ds.
\end{align*}
The result follows applying the Gronwall inequality.
%

\end{proof}


For each $\varepsilon\in[0,\varepsilon_0]$, we define the planar vector field $X_\varepsilon(u,v)=(P_\varepsilon(u,v),Q_\varepsilon(u,v))$, where
\begin{equation}\label{polynomial_vector_field_1}
\begin{cases}
P_0(u,v)= -\lambda u+f_1(u)+g_1(v),  \\
Q_0(u,v)=-\beta v+f_2(u)+g_2(v),
\end{cases}
\end{equation}
and
\begin{equation}\label{polynomial_vector_field_2}
\begin{cases}
P_\varepsilon(u,v)= -\lambda_2^\varepsilon u+\int_0^1 f_1(u\varphi^\varepsilon+\theta^\varepsilon(u\varphi^\varepsilon))\varphi^\varepsilon\,dx+g_1(v)\varphi^\varepsilon(0),  \\
Q_\varepsilon(u,v)=-\beta v+f_2(u\varphi^\varepsilon(0)+\theta^\varepsilon(u\varphi^\varepsilon(0)))+g_2(v).
\end{cases}
\end{equation}


%
%

\section{Poincaré compactification}\label{Compactification}

In this section, we perform a Poincaré compactification for semiflows of \eqref{perturbed_problem} and \eqref{unperturbed_equation} and we prove the $C^1$ closeness of the respective compactified vector fields. Our main reference for Poincaré compactification is \cite{Jaume_book} we also refer to the works \cite{Jaume1990,Jaum1995}.

Recall that we are assuming the nonlinearities $f_1$, $f_2$, $g_1$, and $g_2$ are polynomials such that the maximum degree $d$ among $f_1$, $f_2$, $g_1$, and $g_2$ satisfies $d - 2 \geq 0$, and $d - 1$ is greater than or equal to the degrees of $g_1$ and $f_1$. Moreover, $f_1$, $f_2$, $g_1$, and $g_2$ satisfy the dissipative condition \eqref{f_assumption}. Under this assumption, it is well known from \cite{Carvalho_1995} that \eqref{unperturbed_equation} and \eqref{planar_ODE} possess global attractors, denoted by $\mathcal{A}$ and $\mathcal{A}_\varepsilon$, respectively.

Following the approach in \cite{J.M.Arrieta2000}, we define $K = \sup\{\|u\|_{L^\infty} : u \in \cup_{\varepsilon \geq 0} \mathcal{A}_\varepsilon \}$, which is finite because $\cup_{\varepsilon \geq 0} \mathcal{A}_\varepsilon$ is uniformly bounded. Next, we truncate $f_1$, $f_2$, $g_1$, and $g_2$ outside $(-2K, 2K)$, obtaining new polynomial nonlinearities $\tilde{f}_1$, $\tilde{f}_2$, $\tilde{g}_1$, and $\tilde{g}_2$. These truncated nonlinearities retain their degrees while becoming globally Lipschitz in $\R^2$. In what follows, we continue to use the original notations $f_1$, $f_2$, $g_1$, and $g_2$ for simplicity.

Let $M\subset\R^3$ be a compact differentiable two-dimensional manifold. Consider an open cover $M=V_1\cup...\cup V_k$ with $V_i$ contained in a local chart $(\phi_i,U_i)$ such that $\phi_i(U_i)=B(2)$ and $\phi_i(V_i)=B(1)$, where $B(1)$ and $B(2)$ are open balls centered in the origin of $\R^2$ of radius $1$ and $2$, respectively. The $C^1$-norm of a vector field $X$ in $M$ is defined by
$$
\|X\|_1=\max_{1\leq i\leq k}\sup_{(x,y)\in B(1)}\{\|X(\phi^{-1}_i(x,y))\|_{\R^2},\|X^\prime(\phi^{-1}_i(x,y))\|_{\mathcal{L}(\R^2)} \}.
$$

Let $X_\varepsilon(u^\varepsilon,v)=(P_\varepsilon(u^\varepsilon,v),Q_\varepsilon(u^\varepsilon,v))$, $\varepsilon\in [0,\varepsilon_0]$, be the polynomial vector field defined in \eqref{polynomial_vector_field_1} and \eqref{polynomial_vector_field_2} and denote $d$ the maximum degree of $P_\varepsilon$ and $Q_\varepsilon$.

We embedded $\R^2$ into $\R^3$ as the plane $\{(u,v,1):(u,v)\in\R^2\}$ tangent to $\mathbb{S}^2$ at point $(0,0,1)$. We decompose $\mathbb{S}^2=\mathbb{H}^+\cup\mathbb{S}^1\cup \mathbb{H}^-$, whee $\mathbb{H}^+$ is the northern hemisphere, $\mathbb{H}^-$ is the southern hemisphere and $\mathbb{S}^1$ is the equator. 

The compactification of the vector field $X_\varepsilon:\R^2\to \R^2$ to the sphere $\mathbb{S}^2$ is given by central projections $f^+,f^-:\R^2\to \mathbb{S}^2$, $f^-=-f^+$, where
$$
f^+(u,v)=\Big(\frac{u}{\Delta(u,v)},\frac{v}{\Delta(u,v)},\frac{1}{\Delta(u,v)}\Big), \quad \Delta(u,v)=\sqrt{u^2+v^2+1}.
$$
In $\mathbb{S}^2$ we consider the usual six local charts $\mathbb{S}^2=U_1\cup U_2\cup...\cup U_6$ given by 
$$
U_1=\{(y_1,y_2,y_3)\in \mathbb{S}^2: y_1>0\}, U_2=\{(y_1,y_2,y_3)\in \mathbb{S}^2: y_2>0\}, U_3=\{(y_1,y_2,y_3)\in \mathbb{S}^2: y_3>0\}
$$
$$
U_4=\{(y_1,y_2,y_3)\in \mathbb{S}^2: y_1<0\}, U_5=\{(y_1,y_2,y_3)\in \mathbb{S}^2: y_2<0\}, U_6=\{(y_1,y_2,y_3)\in \mathbb{S}^2: y_3<0\}.
$$
and local maps are given by
$$
\phi_1(u,v,w)=\Big(\frac{v}{u}, \frac{w}{u} \Big),\phi_2(u,v,w)=\Big(\frac{u}{v}, \frac{w}{v} \Big),\phi_3(u,v,w)=\Big(\frac{u}{w}, \frac{v}{w} \Big)
$$
$$
\phi_4(u,v,w)=\Big(-\frac{v}{u}, -\frac{w}{u} \Big),\phi_5(u,v,w)=\Big(-\frac{u}{v}, -\frac{w}{v} \Big),\phi_6(u,v,w)=\Big(-\frac{u}{w}, -\frac{v}{w} \Big).
$$

In order to study  $X_\varepsilon$ in $\R^2$ including its behavior near infinity it is enough working on the Poincaré disk $\mathbb{D}^2=\mathbb{H}^+\cup \mathbb{S}^1$. The compactification $\bar{X}_\varepsilon:\mathbb{D}^2\to\mathbb{D}^2$ of the vector field $X_\varepsilon:\R^2\to \R^2$  is given by
\begin{equation}\label{CompXX}
\bar{X}_\varepsilon(Y)=\rho(Y)Df^+(X)X_\varepsilon(X),\quad Y=F^+(X),
\end{equation}
where $\rho(Y)$ is a bounded correction therm.

According with \cite[Chapter 5]{Jaume_book}, the expression \eqref{CompXX} in the charts $(\phi_i,U_i)$, $i=1,...,6$ is given as follows.

In the chart $(\phi_1,U_1)$ make the change of variable $(u,v)=(1/x,x/y)$ to obtain 
\begin{equation}\label{edo1}
\begin{cases}
\dot{x}=y^d\Big[ -x P_\varepsilon\Big(\frac{1}{y},\frac{x}{y}\Big)+Q_\varepsilon\Big(\frac{1}{y},\frac{x}{y}\Big)\Big],    \\
\dot{y}=-y^{d+1} P_\varepsilon\Big(\frac{1}{y},\frac{x}{y}\Big).
\end{cases}
\end{equation}

In the chart $(\phi_2,U_2)$ make the change of variable $(u,v)=(x/y,1/x)$ to obtain
\begin{equation}\label{edo2}
\begin{cases}
\dot{x}=y^d\Big[  P_\varepsilon\Big(\frac{x}{y},\frac{1}{y}\Big)-xQ_\varepsilon\Big(\frac{x}{y},\frac{1}{y}\Big)\Big],    \\
\dot{y}=-y^{d+1} Q_\varepsilon\Big(\frac{x}{y},\frac{1}{y}\Big).
\end{cases}
\end{equation}

In the chart $(\phi_3,U_3)$  make the change of variable $(u,v)=(x,y)$ to obtain
\begin{equation}\label{edo3}
\begin{cases}
\dot{x}=P_\varepsilon(x,y),  \\
\dot{y}=Q_\varepsilon(x,y).
\end{cases}
\end{equation}

Moreover, the expression for $\bar{X}_\varepsilon$ in the charts $(\phi_4,U_4)$, $(\phi_5,U_5)$ and $(\phi_6,U_6)$ is the same as for $(\phi_1,U_1)$, $(\phi_2,U_2)$ and $(\phi_3,U_3)$ multiplied by $(-1)^d$, respectively. Therefore, all calculations can be done in the first three charts.

The phase portrait on the Poincaré disk $\mathbb{D}^2$ is constructed as follows: The ODE \eqref{edo3} provides the equilibria inside $\mathbb{D}^2$, which coincide with the equilibria of the ODE in $\mathbb{R}^2$. The equations \eqref{edo1} and \eqref{edo2}, by taking $y=0$, yield the equilibria at infinity, which correspond to the equilibria on the equator $\mathbb{S}^1$, the boundary of $\mathbb{D}^2$.

\begin{theorem}\label{theoC1}
Let $\bar{X}_\varepsilon$ and $\bar{X}_0$ be the respective compactification to the Poncaré disk $\mathbb{D}^2$ of the polynomial vector fields $X_\varepsilon$ and $X_0$ defined in \eqref{polynomial_vector_field_1} and \eqref{polynomial_vector_field_2}. Then it is valid the following convergence
\begin{equation}\label{convC1}
\|\bar{X}_\varepsilon-\bar{X}_0\|_1\to 0\quad\hbox{as}\quad\varepsilon\to 0.
\end{equation}
\end{theorem}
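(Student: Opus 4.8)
The plan is to estimate $\|\bar X_\varepsilon-\bar X_0\|_1$ chart by chart. Since the expressions for $\bar X_\varepsilon$ in the charts $(\phi_4,U_4),(\phi_5,U_5),(\phi_6,U_6)$ differ from those in $(\phi_1,U_1),(\phi_2,U_2),(\phi_3,U_3)$ only by the factor $(-1)^d$, it suffices to control the three displayed chart expressions preceding the statement and to take the maximum. In each chart the compactified field is a concrete function of $(x,y)$ built from $P_\varepsilon,Q_\varepsilon$, so the goal becomes the uniform convergence on the closed ball $\overline{B(1)}$ of these functions together with their first derivatives. The only inputs I would use are the convergences already established: $\lambda_2^\varepsilon\to\lambda$ and $\varphi^\varepsilon\to 1$ in $H^1(0,1)$, hence uniformly and with $\varphi^\varepsilon(0)\to 1$, from Theorem \ref{Spectral_behavior}, and the $C^1$-smallness of the invariant manifold $\theta^\varepsilon$, namely $\|\theta^\varepsilon\|_{H^1(0,1)}+\|\dot\theta^\varepsilon\|_{\mathcal{L}(Y_\varepsilon,H^1(0,1))}\to 0$, from Theorem \ref{invariant_manifold_theorem}.

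As a first step I would establish that $X_\varepsilon\to X_0$ in the $C^1$ topology uniformly on compact subsets of $\R^2$. For the integral term of $P_\varepsilon$ one passes to the limit under the integral: on a fixed compact $u$-set the integrand $f_1(u\varphi^\varepsilon+\theta^\varepsilon(u\varphi^\varepsilon))\varphi^\varepsilon$ is uniformly bounded and converges pointwise to $f_1(u)$ because $\varphi^\varepsilon\to 1$ and $\theta^\varepsilon\to 0$, so dominated convergence gives $P_\varepsilon\to P_0$; differentiating under the integral and using in addition $\dot\theta^\varepsilon\to 0$ and the continuity of $f_1'$ gives the convergence of the first derivatives. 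The term $f_2(u\varphi^\varepsilon(0)+\theta^\varepsilon(u\varphi^\varepsilon(0)))$ is handled the same way with $\varphi^\varepsilon(0)\to 1$. This already disposes of the chart $(\phi_3,U_3)$, where $\bar X_\varepsilon=(P_\varepsilon,Q_\varepsilon)$ and $\phi_3^{-1}(\overline{B(1)})$ is a fixed compact subset of the finite plane.

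The substance of the proof is in the two charts at infinity. I would exploit the degree hypotheses: since $\deg f_1,\deg g_1\le d-1$, the field $P_\varepsilon$ has growth of order at most $d-1$, while the order $d$ is attained only by $Q_\varepsilon$ through $f_2$ and $g_2$; moreover the top-degree-$d$ monomial of $Q_\varepsilon$ is free of $\theta^\varepsilon$, its coefficient being $(\varphi^\varepsilon(0))^d$ times the leading coefficient of $f_2$ (resp.\ the leading coefficient of $g_2$), which converges to the corresponding coefficient of $Q_0$. Performing the substitution $(u,v)=(1/y,x/y)$ and multiplying by $y^d$ (resp.\ $y^{d+1}$) then turns each genuinely polynomial monomial into a monomial in $(x,y)$ with a nonnegative power of $y$ and an $\varepsilon$-dependent coefficient, assembled from $\lambda_2^\varepsilon$, $\varphi^\varepsilon(0)$, the moments $\int_0^1(\varphi^\varepsilon)^{k+1}\,dx$ and the fixed coefficients of $f_1,f_2,g_1,g_2$; each of these coefficients converges to the matching coefficient of the compactification of $X_0$, which is convergence in $C^1$ on the bounded set $\overline{B(1)}$. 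Every contribution carrying a factor $\theta^\varepsilon$ or $\dot\theta^\varepsilon$ arises at order strictly below $d$ (because $f_1,g_1$ are subleading and because $\theta^\varepsilon$ occupies at least one factor in the expansion of $f_2$), so after multiplication by $y^d$ it retains a strictly positive power of $y$ and, being weighted by $\|\theta^\varepsilon\|_{C^1}\to 0$ and evaluated on $|y|\le 1$, contributes $o(1)$ in the supremum norm.

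The main obstacle I anticipate is the uniform control of the first derivatives of these $\theta^\varepsilon$-contributions up to the equator $\{y=0\}$, that is, at spatial infinity. Differentiating a term such as $y\,\theta^\varepsilon(\varphi^\varepsilon(0)/y)$ (the subleading $\theta^\varepsilon$-correction of $y^dQ_\varepsilon$) in $y$ produces, via the chain rule, a factor $\dot\theta^\varepsilon(\varphi^\varepsilon(0)/y)/y$ that a priori blows up as $y\to 0$; the degree count alone is not enough to kill it. Resolving this is where I would invoke the precise structure of the invariant manifold underlying Theorem \ref{invariant_manifold_theorem}: the cut-off construction of the cited references yields $\theta^\varepsilon$ whose derivative is supported in a fixed ball, so that $\dot\theta^\varepsilon(\varphi^\varepsilon(0)/y)$ vanishes identically for $y$ near $0$ and is bounded by a constant times $\|\dot\theta^\varepsilon\|\to 0$ elsewhere; the hypothesis $d-2\ge 0$ guarantees that no purely $\theta^\varepsilon$-built term loses more than the available powers of $y$ under differentiation. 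Once the chart expressions and their first derivatives are shown to be bounded up to $\{y=0\}$ and to converge there, taking the maximum over the three charts yields \eqref{convC1}.
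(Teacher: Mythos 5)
Your proposal follows the same basic strategy as the paper --- reduce to the three charts $(\phi_1,U_1),(\phi_2,U_2),(\phi_3,U_3)$, estimate the chart expressions and their first derivatives uniformly on $\overline{B(1)}$, and close the argument with the degree hypotheses together with $\lambda_2^\varepsilon\to\lambda$, $\varphi^\varepsilon\to 1$ and the smallness of the correction term --- but it diverges in one substantive place: how the correction is modelled. The paper does not keep $\theta^\varepsilon(u\varphi^\varepsilon)$ as a function of the phase variable at all; in its proof it replaces it by the orthogonal component $w_\perp^\varepsilon(r,\cdot)$ of the solution, treats that quantity as independent of $(x,y)$ (so it contributes nothing under $\partial_x$ and $\partial_y$), and controls it solely through Theorem \ref{conv_1222}, i.e.\ $\sup_t\|w_\perp^\varepsilon\|_{H^1(0,1)}\to 0$. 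You instead keep the invariant-manifold map $\theta^\varepsilon=s_*^\varepsilon$ as a genuine function of $u$, which is more faithful to the definition \eqref{polynomial_vector_field_2} but forces you to confront chain-rule terms of the form $\dot\theta^\varepsilon(\cdot)\cdot O(1/y^2)$ near the equator. Your route buys consistency with the stated vector field at the price of a harder derivative estimate; the paper's route avoids the chain rule entirely at the price of silently changing what object is being differentiated.

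The one step of yours I would not accept as written is the appeal to a compact-support property of $\dot\theta^\varepsilon$ ("the cut-off construction\dots yields $\theta^\varepsilon$ whose derivative is supported in a fixed ball"). Theorem \ref{invariant_manifold_theorem} asserts only global sup-norm smallness of $s_*^\varepsilon$ and $\dot s_*^\varepsilon$; the usual cut-off is applied to the nonlinearity to make it globally Lipschitz, and it does not make $\dot s_*^\varepsilon$ vanish outside a ball. Fortunately you do not need it: writing $f_2(A+\theta)$ with $A=x\varphi^\varepsilon(0)/y$ and expanding, every term carrying at least one factor of $\theta$ has degree at most $d-1$ in $A$, so after multiplication by $y^d$ (or $y^{d+1}$) it retains at least one power of $y$, and its $y$-derivative costs at most $y^{-2}$ while gaining a factor bounded by $\sup\|\dot s_*^\varepsilon\|\to 0$; the hypothesis $d-2\ge 0$ is exactly what keeps the resulting exponent of $y$ nonnegative. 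Your own closing sentence already contains this count, so you should delete the support claim and rest the argument on the global bounds alone. The term you single out as problematic, $y\,\theta^\varepsilon(\varphi^\varepsilon(0)/y)$, can only arise when $d=1$, which the standing assumption $d-2\ge 0$ excludes.
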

\begin{proof}
Recall that
$$
\begin{cases}
P_\varepsilon(u,v)= -\lambda_2^\varepsilon u+\int_0^1 f_1(u\varphi^\varepsilon(r)+w_\perp^\varepsilon(r,\cdot))\varphi^\varepsilon(r)\,dr+g_1(v)\varphi^\varepsilon(0),\\
Q_\varepsilon(u,v)=-\beta v+f_2(u\varphi^\varepsilon(0)+w_\perp^\varepsilon(0,\cdot))+g_2(v),
\end{cases}
$$
and 
$$
\begin{cases}
P_0(u,v)=-\lambda u+f_1(u)+g_1(v),\\
Q_0(u,v)=-\beta v+f_2(u)+g_2(v).
\end{cases}
$$

\noindent {\it Step 1. $C^0$-convergence.}

In the chart $(\phi_1,U_1)$, $P_\varepsilon$ and $P_0$, read as 
\begin{multline*}
\bar{P}_\varepsilon(x,y)=y^d\Big[-x\Big[-\lambda_2^\varepsilon\frac{1}{y}+ \int_0^1 f_1\Big(\frac{1}{y}\varphi^\varepsilon(r)+w_\perp^\varepsilon(r,\cdot)\Big)\varphi^\varepsilon(r)\,dr+g_1\Big(\frac{x}{y}\Big)\varphi^\varepsilon(0)\Big]\\
+\Big(-\beta\frac{x}{y}+f_2\Big(\frac{1}{y}\varphi^\varepsilon(0)+w_\perp^\varepsilon(0,\cdot)\Big) +g_2\Big(\frac{x}{y}\Big) \Big)    \Big]
\end{multline*}
and
$$
\bar{P}_0(x,y)=y^d\Big[-x\Big[-\lambda\frac{1}{y}+f_1\Big(\frac{1}{y}\Big)+g_1\Big(\frac{x}{y}\Big)\Big]
+\Big(-\beta\frac{x}{y}+f_2\Big(\frac{1}{y}\Big) +g_2\Big(\frac{x}{y}\Big) \Big)    \Big]. 
$$
Thus,
\begin{align*}
|\bar{P}_\varepsilon(x,y)-\bar{P}_0(x,y)|&\leq |\lambda-\lambda_2^\varepsilon||xy^{d-1}|+\int_0^1  |xy^d||f_1\Big(\frac{1}{y}\varphi^\varepsilon(r)+w_\perp^\varepsilon(r,\cdot)\Big)\varphi^\varepsilon(r)-f_1\Big(\frac{1}{y}\Big)| \,dr\\
&+|xy^dg_1\Big(\frac{x}{y}\Big)||\varphi^\varepsilon(0)-1|+|y^d||f_2\Big(\frac{1}{y}\varphi^\varepsilon(0)+w_\perp^\varepsilon(0,\cdot)\Big)-f_2\Big(\frac{1}{y}\Big)|.
\end{align*}
But,
\begin{align*}
\int_0^1 & |xy^d||f_1\Big(\frac{1}{y}\varphi^\varepsilon(r)+w_\perp^\varepsilon(r,\cdot)\Big)\varphi^\varepsilon(r)-f_1\Big(\frac{1}{y}\Big)|\,dr\leq \int_0^1|xy^d||f_1\Big(\frac{1}{y}\Big)|1-\varphi^\varepsilon(r)|\,dr\\
&+\int_0^1 K_1 |xy^d||\frac{1}{y}\varphi^\varepsilon(r)+w_\perp^\varepsilon(r,\cdot)-\frac{1}{y}|\,dr\\
&\leq \int_0^1|xy^d||f_1\Big(\frac{1}{y}\Big)|1-\varphi^\varepsilon(r)|\,dr+ \int_0^1 K_1 |xy^{d-1}||\varphi^\varepsilon(r)-1|+K_1 |xy^{d}||w_\perp^\varepsilon(r,\cdot)|\,dr,
\end{align*}
where $K_1$ depends on the Lipschitz constant of $f_1$ and the boundness of $\varphi^\varepsilon$ on $B(1)$.

In a similar way,
$$
|y^d||f_2\Big(\frac{1}{y}\varphi^\varepsilon(0)+w_\perp^\varepsilon(0,\cdot)\Big)-f_2\Big(\frac{1}{y}\Big)|\leq K_2|y^{d-1}||\varphi^\varepsilon(0)-1|+K_2 |y^{d}||w_\perp^\varepsilon(0,\cdot)|,
$$
where $K_2$ denotes the Lipschitz constant of $f_2$ on $B(1)$.

Since $d$ is greater than or equal degree of $g_1$ and $f_1$, we have that $|xy^{d}f_1(1/y)|$ and $|xy^{d}g_1(x/y)|$ are bounded in $B(1)$. Moreover, $d-1\geq 0$ and, by Theorems \ref{Spectral_behavior} and \ref{conv_1222}, $\lambda_2^\varepsilon\to \lambda$, $\varphi^\varepsilon\to 1$ and $w^\varepsilon_\perp\to 0$ as $\varepsilon\to 0$. Putting all estimates together, we obtain 
$$
\sup_{(x,y)\in B(1)}|\bar{P}_\varepsilon(x,y)-\bar{P}_0(x,y)|\to 0\quad\hbox{as}\quad\varepsilon\to 0.
$$

Moreover, in the chart $(\phi_1,U_1)$, $Q_\varepsilon$ and $Q_0$, read as 
$$
\bar{Q}_\varepsilon(x,y)=-y^{d+1}\Big[ -\lambda_2^\varepsilon \frac{1}{y}+\int_0^1 f_1\Big(\frac{1}{y}\varphi^\varepsilon(r)+w_\perp^\varepsilon(r,\cdot)\Big)\varphi^\varepsilon(r)\,dr+g_1\Big(\frac{x}{y}\Big)\varphi^\varepsilon(0)   \Big]
$$
and
$$
\bar{Q}_0(x,y)=-y^{d+1}\Big[ -\lambda \frac{1}{y}+f_1\Big(\frac{1}{y}\Big)+g_1\Big(\frac{x}{y}\Big)\Big].
$$
Thus,
\begin{align*}
|\bar{Q}_\varepsilon(x,y)-\bar{Q}_0(x,y)|&\leq |\lambda-\lambda_2^\varepsilon||y^{d}|+\int_0^1  |y^{d+1}||f_1\Big(\frac{1}{y}\varphi^\varepsilon(r)+w_\perp^\varepsilon(r,\cdot)\Big)\varphi^\varepsilon(r)-f_1\Big(\frac{1}{y}\Big)| \,dr\\
&+|y^{d+1}g_1\Big(\frac{x}{y}\Big)||\varphi^\varepsilon(0)-1|.
\end{align*}
But
\begin{align*}
\int_0^1 & |y^{d+1}||f_1\Big(\frac{1}{y}\varphi^\varepsilon(r)+w_\perp^\varepsilon(r,\cdot)\Big)\varphi^\varepsilon(r)-f_1\Big(\frac{1}{y}\Big)|\,dr\leq \int_0^1|y^{d+1}||f_1\Big(\frac{1}{y}\Big)|1-\varphi^\varepsilon(r)|\,dr\\
&+\int_0^1 K_1 |y^{d+1}||\frac{1}{y}\varphi^\varepsilon(r)+w_\perp^\varepsilon(r,\cdot)-\frac{1}{y}|\,dr\\
&\leq \int_0^1|y^{d+1}||f_1\Big(\frac{1}{y}\Big)|1-\varphi^\varepsilon(r)|\,dr+\int_0^1 K_1 |y^{d}||\varphi^\varepsilon(r)-1|+K_1 |y^{d+1}||w_\perp^\varepsilon(r,\cdot)|\,dr,
\end{align*}
where $K_1$ depends on the Lipschitz constant of $f_1$ and the boundness of $\varphi^\varepsilon$ on $B(1)$.

Since $d$ is greater than or equal degree of $f_1$ and $g_1$, we have that $|y^{d+1}f_1(1/y)|$ and $|y^{d+1}g_1(x/y)|$ are bounded in $B(1)$. By convergence properties $\lambda_2^\varepsilon\to \lambda$, $\varphi^\varepsilon\to 1$ and $w^\varepsilon_\perp\to 0$ as $\varepsilon\to 0$, we obtain
$$
\sup_{(x,y)\in B(1)}|\bar{Q}_\varepsilon(x,y)-\bar{Q}_0(x,y)|\to 0\quad\hbox{as}\quad\varepsilon\to 0.
$$

In the chart $(\phi_2,U_2)$, $P_\varepsilon$ and $P_0$ reads as
\begin{multline*}
\bar{P}_\varepsilon(x,y)=y^d\Big[-\lambda_2^\varepsilon\frac{x}{y}+ \int_0^1 f_1\Big(\frac{x}{y}\varphi^\varepsilon(r)+w_\perp^\varepsilon(r,\cdot)\Big)\varphi^\varepsilon(r)\,dr+g_1\Big(\frac{1}{y}\Big)\varphi^\varepsilon(0)\\
-x\Big(-\beta\frac{1}{y}+f_2\Big(\frac{x}{y}\varphi^\varepsilon(0)+w_\perp^\varepsilon(0,\cdot)\Big) +g_2\Big(\frac{1}{y}\Big) \Big)    \Big] 
\end{multline*}
and
$$
\bar{P}_0(x,y)=y^d\Big[-\lambda\frac{x}{y}+f_1\Big(\frac{x}{y}\Big)+g_1\Big(\frac{1}{y}\Big)
-x\Big(-\beta\frac{1}{y}+f_2\Big(\frac{x}{y}\Big) +g_2\Big(\frac{1}{y}\Big) \Big)\Big]. 
$$
Thus,
\begin{align*}
|\bar{P}_\varepsilon(x,y)-\bar{P}_0(x,y)|&\leq |\lambda-\lambda_2^\varepsilon||xy^{d-1}|+\int_0^1  |y^d||f_1\Big(\frac{x}{y}\varphi^\varepsilon(r)+w_\perp^\varepsilon(r,\cdot)\Big)\varphi^\varepsilon(r)-f_1\Big(\frac{x}{y}\Big)| \,dr\\
&+|y^dg_1\Big(\frac{1}{y}\Big)||\varphi^\varepsilon(0)-1|+|xy^d||f_2\Big(\frac{x}{y}\varphi^\varepsilon(0)+w_\perp^\varepsilon(0,\cdot)\Big)-f_2\Big(\frac{x}{y}\Big)|
\end{align*}
But
\begin{align*}
\int_0^1 & |y^d||f_1\Big(\frac{x}{y}\varphi^\varepsilon(r)+w_\perp^\varepsilon(r,\cdot)\Big)\varphi^\varepsilon(r)-f_1\Big(\frac{x}{y}\Big)|\,dr\leq \int_0^1|y^{d}||f_1\Big(\frac{x}{y}\Big)|1-\varphi^\varepsilon(r)|\,dr\\
&+\int_0^1 K_1 |y^d||\frac{x}{y}\varphi^\varepsilon(r)+w_\perp^\varepsilon(r,\cdot)-\frac{x}{y}|\,dr\\
&\leq \int_0^1|y^{d}||f_1\Big(\frac{x}{y}\Big)|1-\varphi^\varepsilon(r)|\,dr+\int_0^1 K_1 |xy^{d-1}||\varphi^\varepsilon(r)-1|+K_1 |y^{d}||w_\perp^\varepsilon(r,\cdot)|\,dr
\end{align*}
and
\begin{align*}
|xy^d||f_2\Big(\frac{x}{y}\varphi^\varepsilon(0)+w_\perp^\varepsilon(0,\cdot)\Big)-f_2\Big(\frac{x}{y}\Big)|\leq K_2 |x^2y^{d-1}||\varphi^\varepsilon(0)-1|+K_2 |xy^{d}||w_\perp^\varepsilon(0,\cdot)|,
\end{align*}
where $K_2$ denotes the Lipschitz constant of $f_2$ on $B(1)$.

Since $d$ is greater than or equal degree of $g_1$ and $f_1$, we have that $|y^{d}f_1(x/y)|$ and $|y^{d}g_1(1/y)|$ are bounded in $B(1)$. By convergence properties $\lambda_2^\varepsilon\to \lambda$, $\varphi^\varepsilon\to 1$ and $w^\varepsilon_\perp\to 0$ as $\varepsilon\to 0$, we obtain
$$
\sup_{(x,y)\in B(1)}|\bar{P}_\varepsilon(x,y)-\bar{P}_0(x,y)|\to 0\quad\hbox{as}\quad\varepsilon\to 0.
$$
Moreover, in the chart $(\phi_2,U_2)$, $Q_\varepsilon$ and $Q_0$ reads as
$$
\bar{Q}_\varepsilon(x,y)=-y^{d+1}\Big[ -\beta \frac{1}{y}+f_2\Big(\frac{x}{y} \varphi^\varepsilon(0)+w_\perp^\varepsilon(0,\cdot)\Big)+g_2\Big(\frac{1}{y}\Big)\Big]
$$
and
$$
\bar{Q}_0(x,y)=-y^{d+1}\Big[-\beta \frac{1}{y}+f_2\Big(\frac{x}{y}\Big)+g_2\Big(\frac{1}{y}\Big)\Big].
$$
Thus,
\begin{align*}
|\bar{Q}_\varepsilon(x,y)-\bar{Q}_0(x,y)|&\leq |y^{d+1}||f_2\Big(\frac{x}{y}\varphi^\varepsilon(0)+w_\perp^\varepsilon(0,\cdot)\Big)-f_2\Big(\frac{x}{y}\Big)|\\
&\leq K_2 |xy^{d}||\varphi^\varepsilon(0)-1|+K_2 |y^{d+1}||w_\perp^\varepsilon(0,\cdot)|.
\end{align*}
By convergence properties  $\varphi^\varepsilon\to 1$ and $w^\varepsilon_\perp\to 0$ as $\varepsilon\to 0$, we obtain
$$
\sup_{(x,y)\in B(1)}|\bar{Q}_\varepsilon(x,y)-\bar{Q}_0(x,y)|\to 0\quad\hbox{as}\quad\varepsilon\to 0.
$$

In the chart $(\phi_3,U_3)$, $P_\varepsilon$ and $P_0$ reads as
$$
\bar{P}_\varepsilon(x,y)= -\lambda_2^\varepsilon x+\int_0^1 f_1(x\varphi^\varepsilon(r)+w_\perp^\varepsilon(r,\cdot))\varphi^\varepsilon(r)\,dr+g_1(y)\varphi^\varepsilon(0)
$$
and 
$$
\bar{P}_0(x,y)=-\lambda x+f_1(x)+g_1(y).
$$
Thus,
\begin{align*}
|\bar{P}_\varepsilon(x,y)-\bar{P}_0(x,y)|&\leq \int_0^1 |f_1(x\varphi^\varepsilon(r)+w_\perp^\varepsilon(r,\cdot))\varphi^\varepsilon(r)-f_1(x)|\,dr\\
&\leq \int_0^1 |f_1(x)||1-\varphi^\varepsilon(r)|\,dr+\int_0^1 K_1 |x\varphi^\varepsilon(r)+w_\perp^\varepsilon(r,\cdot)-x|\,dr\\
&\leq \int_0^1 K_1 |x||\varphi^\varepsilon(r)-1|+K_1|w_\perp^\varepsilon(r,\cdot)|\,dr.
\end{align*}
By convergence properties $\varphi^\varepsilon\to 1$ and $w^\varepsilon_\perp\to 0$ as $\varepsilon\to 0$, we obtain
$$
\sup_{(x,y)\in B(1)}|\bar{P}_\varepsilon(x,y)-\bar{P}_0(x,y)|\to 0\quad\hbox{as}\quad\varepsilon\to 0.
$$
Moreover, in the chart $(\phi_3,U_3)$, $Q_\varepsilon$ and $Q_0$ reads as
$$
\bar{Q}_\varepsilon(x,y)=-\beta y+f_2(x\varphi^\varepsilon(0)+w_\perp^\varepsilon(0,\cdot))+g_2(y),
$$
and 
$$
\bar{Q}_0(x,y)=-\beta y+f_2(x)+g_2(y).
$$
Thus
\begin{align*}
|\bar{Q}_\varepsilon(x,y)-\bar{Q}_0(x,y)|&\leq |f_2(x\varphi^\varepsilon(0)+w_\perp^\varepsilon(0,\cdot))-f_2(x)|\\
&\leq K_2 |x||\varphi^\varepsilon(0)-1|+K_2|w_\perp^\varepsilon(0,\cdot)|.
\end{align*}
By convergence properties $\varphi^\varepsilon\to 1$ and $w^\varepsilon_\perp\to 0$ as $\varepsilon\to 0$, we obtain
$$
\sup_{(x,y)\in B(1)}|\bar{Q}_\varepsilon(x,y)-\bar{Q}_0(x,y)|\to 0\quad\hbox{as}\quad\varepsilon\to 0.
$$

\noindent {\it Step 2. $C^1$-convergence.}

In the chart $(\phi_1,U_1)$, the partial derivatives of $\bar{P}_\varepsilon$ and $\bar{P}_0$ with respect to $x$ reads as
\begin{align*}
\frac{\partial \bar{P}_\varepsilon}{\partial x}(x,y)&=y^{d-1}\lambda_2^\varepsilon-y^{d}\int_0^1f_1\Big(\frac{1}{y}\varphi^\varepsilon(r)+w_\perp^\varepsilon(r,\cdot)\Big)\varphi^\varepsilon(r)\,dr\\
&-y^dg_1\Big(\frac{x}{y}\Big)\varphi^\varepsilon(0)-xy^{d-1}g^\prime_1\Big(\frac{x}{y}\Big)\varphi^\varepsilon(0)-\beta y^{d-1}+y^{d-1}g^\prime_2\Big(\frac{x}{y}\Big)
\end{align*}
and 
\begin{align*}
\frac{\partial \bar{P}_0}{\partial x}(x,y)&=y^{d-1}\lambda-y^{d}f_1\Big(\frac{1}{y}\Big)\\
&-y^dg_1\Big(\frac{x}{y}\Big)-xy^{d-1}g^\prime_1\Big(\frac{x}{y}\Big)-\beta y^{d-1}+y^{d-1}g^\prime_2\Big(\frac{x}{y}\Big).
\end{align*}
Thus,
\begin{align*}
\Big|\frac{\partial \bar{P}_\varepsilon}{\partial x}(x,y)- \frac{\partial \bar{P}_0}{\partial x}(x,y)\Big|&\leq |y^{d-1}|\lambda_2^\varepsilon-\lambda|+|y^{d}|\int_0^1|f_1\Big(\frac{1}{y}\varphi^\varepsilon(r)+w_\perp^\varepsilon(r,\cdot)\Big)\varphi^\varepsilon(r)-f_1\Big(\frac{1}{y}\Big)|\,dr\\
&+|y^dg_1\Big(\frac{x}{y}\Big)||\varphi^\varepsilon(0)-1|+|xy^{d-1}g^\prime_1\Big(\frac{x}{y}\Big)||\varphi^\varepsilon(0)-1|.
\end{align*}

Since $d$ is greater than or equal degree of $g_1$, we have that $|y^{d}g_1(x/y)|$ and $|xy^{d-1}g^\prime_1(x/y)|$ are bounded in $B(1)$ The same boundness is true for $|y^{d}f_1(1/y)|$. By convergence properties $\lambda_2^\varepsilon\to \lambda$, $\varphi^\varepsilon\to 1$ and $w_\perp^\varepsilon\to 0$ as $\varepsilon\to 0$, we obtain
$$
\sup_{(x,y)\in B(1)}\Big|\frac{\partial \bar{P}_\varepsilon}{\partial x}(x,y)- \frac{\partial \bar{P}_0}{\partial x}(x,y)\Big|\to 0\quad\hbox{as}\quad\varepsilon\to 0.
$$

In the chart $(\phi_1,U_1)$, the partial derivatives of $\bar{P}_\varepsilon$ and $\bar{P}_0$ with respect to $y$ reads as
\begin{align*}
\frac{\partial \bar{P}_\varepsilon}{\partial y}(x,y)&=x(d-1)y^{d-2}\lambda_2^\varepsilon-xdy^{d-1}\int_0^1 f_1\Big(\frac{1}{y}\varphi^\varepsilon(r)+w_\perp^\varepsilon(r,\cdot)\Big)\varphi^\varepsilon(r)\,dr\\
&+xy^{d-2}\int_0^1 f^\prime_1\Big(\frac{1}{y}\varphi^\varepsilon(r)+w_\perp^\varepsilon(r,\cdot)\Big)\varphi^\varepsilon(r)\,dr\\
&-xdy^{d-1}g_1\Big(\frac{x}{y}\Big)\varphi^\varepsilon(0)+x^2y^{d-2}g^\prime_1\Big(\frac{x}{y}\Big)\varphi^\varepsilon(0)\\
&-\beta x(d-1)y^{d-2}+dy^{d-1}f_2\Big(\frac{1}{y}\varphi^\varepsilon(0)+w_\perp^\varepsilon(0,\cdot)\Big)-y^{d-2}f^\prime_2\Big(\frac{1}{y}\varphi^\varepsilon(0)+w_\perp^\varepsilon(0,\cdot)\Big)\\
&+dy^{d-1}g_2\Big(\frac{x}{y}\Big)-xy^{d-2}g^\prime_2\Big(\frac{x}{y}\Big)
\end{align*}
and 
\begin{align*}
\frac{\partial \bar{P}_0}{\partial y}(x,y)&=x(d-1)y^{d-2}\lambda-xdy^{d-1}f_1\Big(\frac{1}{y}\Big)\\
&+xy^{d-2}f^\prime_1\Big(\frac{1}{y}\Big)\\
&-xdy^{d-1}g_1\Big(\frac{x}{y}\Big)+x^2dy^{d-2}g^\prime_1\Big(\frac{x}{y}\Big)\\
&-\beta x(d-1)y^{d-2}+dy^{d-1}f_2\Big(\frac{1}{y}\Big)-y^{d-2}f^\prime_2\Big(\frac{1}{y}\Big)\\
&+dy^{d-1}g_2\Big(\frac{x}{y}\Big)-xy^{d-2}g^\prime_2\Big(\frac{x}{y}\Big).
\end{align*}

Since $d-2\geq 0$ and $d-1$ is greater than or equal degree of $g_1$ and $f_1$, as in the previous estimates, we obtain
$$
\sup_{(x,y)\in B(1)}\Big|\frac{\partial \bar{P}_\varepsilon}{\partial y}(x,y)- \frac{\partial \bar{P}_0}{\partial y}(x,y)\Big|\to 0\quad\hbox{as}\quad\varepsilon\to 0.
$$

Moreover, in the chart $(\phi_1,U_1)$, the partial derivatives of $\bar{Q}_\varepsilon$ and $\bar{Q}_0$ with respect to $x$ reads as
\begin{align*}
\frac{\partial\bar{Q}_\varepsilon}{\partial x}(x,y)&=-y^dg^\prime_1\Big(\frac{x}{y}\Big)\varphi^\varepsilon(0) 
\end{align*}
and 
\begin{align*}
\frac{\partial \bar{Q}_0}{\partial x}(x,y)&=-y^dg^\prime_1\Big(\frac{x}{y}\Big).
\end{align*}
Thus,
$$
\sup_{(x,y)\in B(1)}\Big|\frac{\partial \bar{Q}_\varepsilon}{\partial x}(x,y)- \frac{\partial \bar{Q}_0}{\partial x}(x,y)\Big|\to 0\quad\hbox{as}\quad\varepsilon\to 0.
$$

In the chart $(\phi_1,U_1)$, the partial derivatives of $\bar{Q}_\varepsilon$ and $\bar{Q}_0$ with respect to $y$ reads as
\begin{align*}
\frac{\partial\bar{Q}_\varepsilon}{\partial y}(x,y)&=d\lambda_2^\varepsilon y^{d-1}-(d+1)y^d\int_0^1 f_1\Big(\frac{1}{y}\varphi^\varepsilon(r)+w_\perp^\varepsilon(r,\cdot)\Big)\varphi^\varepsilon(r)\,dr\\
&+y^{d-1}\int_0^1 f^\prime_1\Big(\frac{1}{y}\varphi^\varepsilon(r)+w_\perp^\varepsilon(r,\cdot)\Big)\varphi^\varepsilon(r)\,dr\\
&-(d+1)y^dg_1\Big(\frac{x}{y}\Big)\varphi^\varepsilon(0)+xy^{d-1}g^\prime_1\Big(\frac{x}{y}\Big)\varphi^\varepsilon(0) 
\end{align*}
and 
\begin{align*}
\frac{\partial \bar{Q}_0}{\partial y}(x,y)&=d\lambda y^{d-1}-(d+1)y^df_1\Big(\frac{1}{y}\Big)\\
&+y^{d-1} f^\prime_1\Big(\frac{1}{y}\Big)\\
&-(d+1)y^dg_1\Big(\frac{x}{y}\Big)+xy^{d-1}g^\prime_1\Big(\frac{x}{y}\Big). 
\end{align*}
Since $d-1\geq 0$ and $d$ is greater than degree of $g_1$ and $f_1$, as in the previous estimates, we obtain
$$
\sup_{(x,y)\in B(1)}\Big|\frac{\partial \bar{Q}_\varepsilon}{\partial y}(x,y)- \frac{\partial \bar{Q}_0}{\partial y}(x,y)\Big|\to 0\quad\hbox{as}\quad\varepsilon\to 0.
$$

In the chart $(\phi_2,U_2)$, the partial derivatives of $\bar{P}_\varepsilon$ and $\bar{P}_0$ with respect to $x$ reads as
\begin{align*}
\frac{\partial \bar{P}_\varepsilon}{\partial x}(x,y)&=-y^{d-1}\lambda_2^\varepsilon+y^{d-1}\int_0^1f^\prime_1\Big(\frac{x}{y}\varphi^\varepsilon(r)+w_\perp^\varepsilon(r,\cdot)\Big)\varphi^\varepsilon(r)\,dr\\
& +\beta y^{d-1}-y^{d}f_2\Big(\frac{x}{y}\varphi^\varepsilon(0)+w_\perp^\varepsilon(0,\cdot)\Big)-xy^{d-1}f^\prime_2\Big(\frac{x}{y}\varphi^\varepsilon(0)+w_\perp^\varepsilon(0,\cdot)\Big)\\
& -y^dg_2\Big(\frac{1}{y}\Big)
\end{align*}
and 
\begin{align*}
\frac{\partial \bar{P}_0}{\partial x}(x,y)&=-y^{d-1}\lambda+y^{d-1}f^\prime_1\Big(\frac{x}{y}\Big)\\
& +\beta y^{d-1}-y^{d}f_2\Big(\frac{x}{y}\Big)-xy^{d-1}f^\prime_2\Big(\frac{x}{y}\Big)\\
& -y^dg_2\Big(\frac{1}{y}\Big)
\end{align*}

Since $d-1\geq 0$, as in the previous estimates, we obtain
$$
\sup_{(x,y)\in B(1)}\Big|\frac{\partial \bar{P}_\varepsilon}{\partial x}(x,y)- \frac{\partial \bar{P}_0}{\partial x}(x,y)\Big|\to 0\quad\hbox{as}\quad\varepsilon\to 0.
$$

In the chart $(\phi_2,U_2)$, the partial derivatives of $\bar{P}_\varepsilon$ and $\bar{P}_0$ with respect to $y$ reads as
\begin{align*}
\frac{\partial \bar{P}_\varepsilon}{\partial y}(x,y)&=-x(d-1)y^{d-2}\lambda_2^\varepsilon+dy^{d-1}\int_0^1 f_1\Big(\frac{x}{y}\varphi^\varepsilon(r)+w_\perp^\varepsilon(r,\cdot)\Big)\varphi^\varepsilon(r)\,dr\\
&-xy^{d-2}\int_0^1 f^\prime_1\Big(\frac{x}{y}\varphi^\varepsilon(r)+w_\perp^\varepsilon(r,\cdot)\Big)\varphi^\varepsilon(r)\,dr\\
&+dy^{d-1}g_1\Big(\frac{1}{y}\Big)\varphi^\varepsilon(0)-y^{d-2}g^\prime_1\Big(\frac{1}{y}\Big)\varphi^\varepsilon(0)\\
&+\beta x(d-1)y^{d-2}-dxy^{d-1}f_2\Big(\frac{x}{y}\varphi^\varepsilon(0)+w_\perp^\varepsilon(0,\cdot)\Big)-x^2y^{d-2}f^\prime_2\Big(\frac{x}{y}\varphi^\varepsilon(0)+w_\perp^\varepsilon(0,\cdot)\Big)\\
&-dxy^{d-1}g_2\Big(\frac{1}{y}\Big)+xy^{d-2}g^\prime_2\Big(\frac{1}{y}\Big)
\end{align*}
and 
\begin{align*}
\frac{\partial \bar{P}_0}{\partial y}(x,y)&=-(d-1)y^{d-2}\lambda+dy^{d-1}f_1\Big(\frac{x}{y}\Big)\\
&-x^2y^{d-2}f^\prime_1\Big(\frac{x}{y}\Big)\\
&+dy^{d-1}g_1\Big(\frac{1}{y}\Big)-y^{d-2}g^\prime_1\Big(\frac{1}{y}\Big)\\
&+\beta x(d-1)y^{d-2}-xdy^{d-1}f_2\Big(\frac{x}{y}\Big)-x^2y^{d-2}f^\prime_2\Big(\frac{x}{y}\Big)\\
&-xdy^{d-1}g_2\Big(\frac{1}{y}\Big)+xy^{d-2}g^\prime_2\Big(\frac{1}{y}\Big).
\end{align*}

Since $d-2\geq 0$ and $d-1$ is greater than or equal degree of $g_1$ and $f_1$, as in the previous estimates, we obtain
$$
\sup_{(x,y)\in B(1)}\Big|\frac{\partial \bar{P}_\varepsilon}{\partial y}(x,y)- \frac{\partial \bar{P}_0}{\partial y}(x,y)\Big|\to 0\quad\hbox{as}\quad\varepsilon\to 0.
$$

Moreover, in the chart $(\phi_2,U_2)$, the partial derivatives of $\bar{Q}_\varepsilon$ and $\bar{Q}_0$ with respect to $x$ reads as
\begin{align*}
\frac{\partial\bar{Q}_\varepsilon}{\partial x}(x,y)&=xy^{d-1}f^\prime_2\Big(\frac{x}{y}\varphi^\varepsilon(0)+w_\perp^\varepsilon(0,\cdot)\Big)
\end{align*}
and 
\begin{align*}
\frac{\partial \bar{Q}_0}{\partial x}(x,y)&=xy^{d-1}f^\prime_2\Big(\frac{x}{y}\Big).
\end{align*}
Thus,
$$
\sup_{(x,y)\in B(1)}\Big|\frac{\partial \bar{Q}_\varepsilon}{\partial x}(x,y)- \frac{\partial \bar{Q}_0}{\partial x}(x,y)\Big|\to 0\quad\hbox{as}\quad\varepsilon\to 0.
$$

In the chart $(\phi_2,U_2)$, the partial derivatives of $\bar{Q}_\varepsilon$ and $\bar{Q}_0$ with respect to $y$ reads as
\begin{align*}
\frac{\partial\bar{Q}_\varepsilon}{\partial y}(x,y)&=\beta dy^{d-1}-(d+1)y^df_2\Big(\frac{x}{y}\varphi^\varepsilon(0)+w_\perp^\varepsilon(0,\cdot)\Big)+xy^{d-1}f^\prime_2\Big(\frac{x}{y}\varphi^\varepsilon(0)+w_\perp^\varepsilon(0,\cdot)\Big)\\
&-(d+1)y^dg_2\Big(\frac{1}{y}\Big)+y^{d-1}g^\prime_2\Big(\frac{1}{y}\Big)
\end{align*}
and 
\begin{align*}
\frac{\partial \bar{Q}_0}{\partial y}(x,y)&=\beta dy^{d-1}-(d+1)y^df_2\Big(\frac{x}{y}\Big)+xy^{d-1}f^\prime_2\Big(\frac{x}{y}\Big)\\
&-(d+1)y^dg_2\Big(\frac{1}{y}\Big)+y^{d-1}g^\prime_2\Big(\frac{1}{y}\Big).
\end{align*}
Since $d-1\geq 0$, as in the previous estimates, we obtain
$$
\sup_{(x,y)\in B(1)}\Big|\frac{\partial \bar{Q}_\varepsilon}{\partial y}(x,y)- \frac{\partial \bar{Q}_0}{\partial y}(x,y)\Big|\to 0\quad\hbox{as}\quad\varepsilon\to 0.
$$

Finally, In the chart $(\phi_3,U_3)$, the partial derivatives of $\bar{P}_\varepsilon$ and $\bar{P}_0$ with respect to $x$ reads as
\begin{align*}
\frac{\partial \bar{P}_\varepsilon}{\partial x}(x,y)&=-\lambda_2^\varepsilon+\int_0^1f^\prime_1(x\varphi^\varepsilon(r)+w_\perp^\varepsilon(r,\cdot))\varphi^\varepsilon(r)\,dr\\
\end{align*}
and 
\begin{align*}
\frac{\partial \bar{P}_0}{\partial x}(x,y)&=-\lambda +f^\prime_1(x)
\end{align*}
As in the previous estimates, we obtain
$$
\sup_{(x,y)\in B(1)}\Big|\frac{\partial \bar{P}_\varepsilon}{\partial x}(x,y)- \frac{\partial \bar{P}_0}{\partial x}(x,y)\Big|\to 0\quad\hbox{as}\quad\varepsilon\to 0.
$$

In the chart $(\phi_3,U_3)$, the partial derivatives of $\bar{P}_\varepsilon$ and $\bar{P}_0$ with respect to $y$ reads as
\begin{align*}
\frac{\partial \bar{P}_\varepsilon}{\partial y}(x,y)&=g^\prime_1(y)\varphi^\varepsilon(0)
\end{align*}
and 
\begin{align*}
\frac{\partial \bar{P}_0}{\partial y}(x,y)&=g^\prime_1(y)
\end{align*}
As in the previous estimates, we obtain
$$
\sup_{(x,y)\in B(1)}\Big|\frac{\partial \bar{P}_\varepsilon}{\partial y}(x,y)- \frac{\partial \bar{P}_0}{\partial y}(x,y)\Big|\to 0\quad\hbox{as}\quad\varepsilon\to 0.
$$

Moreover, in the chart $(\phi_3,U_3)$, the partial derivatives of $\bar{Q}_\varepsilon$ and $\bar{Q}_0$ with respect to $x$ reads as
\begin{align*}
\frac{\partial\bar{Q}_\varepsilon}{\partial x}(x,y)&=f^\prime_2(x\varphi^\varepsilon(0)+w_\perp^\varepsilon(0,\cdot))
\end{align*}
and 
\begin{align*}
\frac{\partial \bar{Q}_0}{\partial x}(x,y)&=f^\prime_2(x).
\end{align*}
Thus,
$$
\sup_{(x,y)\in B(1)}\Big|\frac{\partial \bar{Q}_\varepsilon}{\partial x}(x,y)- \frac{\partial \bar{Q}_0}{\partial x}(x,y)\Big|\to 0\quad\hbox{as}\quad\varepsilon\to 0.
$$

In the chart $(\phi_3,U_3)$, the partial derivatives of $\bar{Q}_\varepsilon$ and $\bar{Q}_0$ with respect to $y$ reads as
\begin{align*}
\frac{\partial\bar{Q}_\varepsilon}{\partial y}(x,y)&=-\beta+ g^\prime_2(y)
\end{align*}
and 
\begin{align*}
\frac{\partial \bar{Q}_0}{\partial y}(x,y)&=-\beta+ g^\prime_2(y)
\end{align*}
Thus,
$$
\Big|\frac{\partial \bar{Q}_\varepsilon}{\partial y}(x,y)- \frac{\partial \bar{Q}_0}{\partial y}(x,y)\Big|= 0.
$$

Putting all convergence together we obtain \eqref{convC1}.
\end{proof}

\begin{remark}\label{obs}
Notice that, if  $g_1(0)=0$ then we have 
$$
|y^{d-1}g\Big(\frac{x}{y}\Big)||\varphi^\varepsilon(0)-1|\leq K_2|y^d||\varphi^\varepsilon(0)-1|\to 0\quad \hbox{as}\quad \varepsilon\to 0,
$$
where $K_2$ denotes the Lipschitz constant of $g_1$ o $B(1)$. Hence, we can remove the assumption $d-1$ greater than or equal degree of $g_1$ in this case.

A similar remark is valid when $f_1(0)=0$.
\end{remark}

\section{Example: }\label{Examples}

Taking $\lambda =1$, $\beta=1$ $f_1(u)=u-u^3$, $g_1(v)=-v$, , $f_2=-u$ and $g_2=v-v^3$.  By Remark \ref{obs} we have all conditions on $f_1,f_2,g_1$ and $g_2$ satisfied with $d=3$.  The ODE \eqref{unperturbed_equation} takes the form
\begin{equation}\label{unperturbed_equationex1}
\begin{cases}
\dot{u}=-u^3-v,\\
\dot{v}=-v^3-u,
\end{cases}
\end{equation}
which has a saddle point $(0,0)$ and two local attractors $(-1,1)$ and $(1,-1)$.  

To find the equilibrium points at infinity, we analyze $X_0$ in the charts $U_1$ and $U_2$ and set $v=0$.

In the chart $(\varphi_1,U_1)$ the vector field $X_0(u,v)$ has the expression
\begin{equation}
\begin{cases}
\dot{u}=u+u^2v-u^3-v^2,    \\
\dot{v}=v+uv^3,
\end{cases}
\end{equation}
which has a local repeller at $(0,0)$ and two saddle $(1,0)$ and $(-1,0)$ which correspond to equilibrium points at infinity.

In the chart $(\varphi_2,U_2)$ the vector field $X_0(u,v)$ has the expression
$$
\begin{cases}
\dot{u}=-u^3-v^2+u+u^2v^2, \\
\dot{v}=v+uv^3.
\end{cases}
$$
which also has a local repeller at $(0,0)$ and two saddle $(1,0)$ and $(-1,0)$ which correspond to equilibrium points at infinity.

\begin{figure}[h!]
    \centering
    \includegraphics[width=0.32\linewidth]{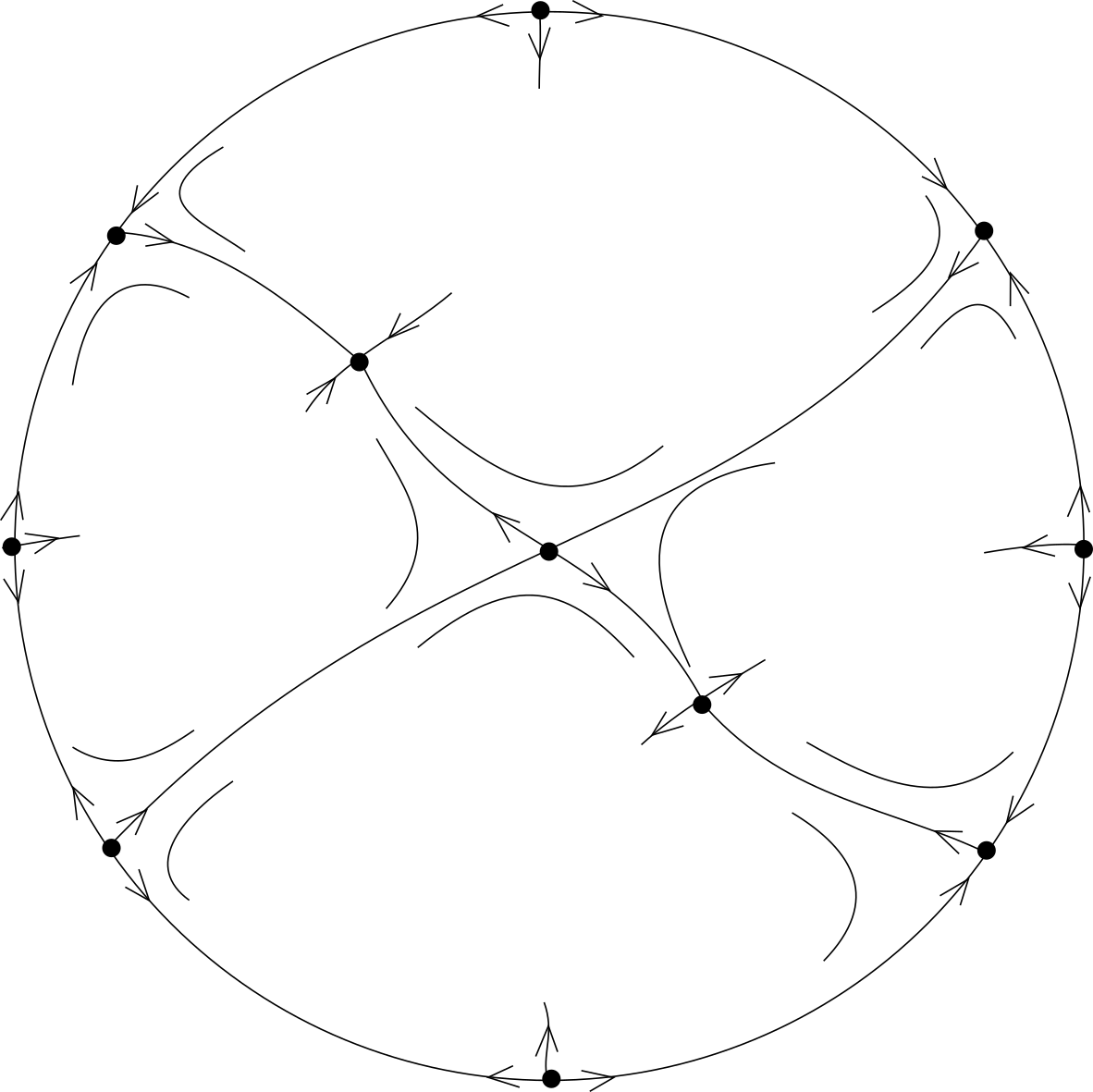}
    \caption{Phase Portrait}
    \label{}
\end{figure}

\bibliographystyle{abbrv}
\bibliography{References}

\begin{thebibliography}{10}

\bibitem{J.M.Arrieta2000}
J.~M. Arrieta, A.~N. Carvalho, and A.~Rodr{\'\i}guez-Bernal.
\newblock Attractors for parabolic problems with nonlinear boundary bondition.
  {U}niform bounds.
\newblock {\em Communication in partial differential equations}, 25:1--37,
  2000.

\bibitem{Carvalho_1995}
A.~N. Carvalho.
\newblock Contracting sets and dissipation.
\newblock {\em Procedding Royal Soc. Edinburg}, 125A:1305--1329, 1995.

\bibitem{Lpires1}
A.~N. Carvalho and L.~Pires.
\newblock Rate of convergence of attractors for singularly perturbed semilinear
  problems.
\newblock {\em Journal of Mathematical Analysis and Applications},
  452:258--296, 2017.

\bibitem{Carvalho}
A.~N. Carvalho and L.~Pires.
\newblock Parabolic equations with localized large diffusion: rate of
  convergence of attractors.
\newblock {\em Topological Methods in Nonlinear Analysis}, 53(1):1--23, 2019.

\bibitem{Jaume1990}
A.~Cima and J.~Llibre.
\newblock Bounded polynomial vector fields.
\newblock {\em Transactions of the American Mathematical Society},
  318(2):557--579, 1990.

\bibitem{Jaum1995}
J.~Delgado, E.~A. Lacomba, J.~Llibre, and E.~Perez.
\newblock {\em Poincar{\'e} compactification of Hamiltonian polynomial vector
  fields}.
\newblock Hamiltonian dynamical systems, New York: Springer, 1995.

\bibitem{Jaume_book}
F.~Dumortier, J.~Llibre, and J.~C. Art{\'e}s.
\newblock {\em Qualitative Theory of Planar Differential Systems}.
\newblock Springer-Verlag, 2006.

\bibitem{Axioms23}
S.~M. Filipov, J.~Hristov, A.~Avdzhieva, and I.~Farag{\'o}.
\newblock A coupled pde-ode model for nonlinear transient heat transfer with
  convection heating at the boundary: Numerical solution by implicit time
  discretization and sequential decoupling.
\newblock {\em Axioms}, 12(323), 2023.

\bibitem{Hale1986}
J.~K. Hale.
\newblock Large diffusivity and asymptotic behavior in parabolic systems.
\newblock {\em Journal of Mathematical Analysis and Applications},
  118:455--466, 1986.

\bibitem{hale_rocha_87}
J.~K. Hale and C.~Rocha.
\newblock Varying boundary conditions with large diffusion.
\newblock {\em Journal de Math{\'e}matiques Pures et Appliqu{\'e}es},
  66:139--158, 1987.

\bibitem{Pires_2023_4}
L.~Pires.
\newblock Reaction-diffusion equations with large diffusion and convection
  heating at the boundary.
\newblock {\em Differential Equations and Dynamical Systems}, 2025.

\bibitem{Pires_Sam_22}
L.~Pires and R.~A. Samprogna.
\newblock Large diffusivity and rate of convergence of attractors in parabolic
  systems.
\newblock {\em Filomat}, 2022.

\bibitem{Anibal1990}
A.~Rodrigues-Bernal.
\newblock Inertial manifolds for dissipative semiflows in {Banach} spaces.
\newblock {\em Applicable Analysis}, 37:95--141, 1990.

\bibitem{Rodriguez-Bernal1998}
A.~Rodr{\'\i}guez-Bernal.
\newblock Localized spatial homogenizations and large diffusion.
\newblock {\em SIAM Journal of Mathematical Analysis}, 29(6):1361--1380, 1998.

\bibitem{Rodriguez-Bernal2005}
A.~Rodr{\'\i}guez-Bernal and R.~Willie.
\newblock Singular large diffusivity and spatial homogenization in a non
  homogeneous linear parabolic problem.
\newblock {\em Discrete and Continuous Dynamical Systems. Series B. A Journal
  Bridging Mathematics and Sciences}, 5(2):385--410, 2005.

\bibitem{Santamaria2014}
E.~Santamaria and J.~M. Arrieta.
\newblock Estimates on the distance of inertial manifolds.
\newblock {\em Discrete and continuous dinamical systems}, 34(10):3921--3944,
  2014.

\end{thebibliography}
\end{document}